\date{}
\newcommand{\pathToCommon}{.}
\newcommand{\pathToCommonFigs}{.}
\newcommand{\pathToFigures}{.}
\newcommand{\R}{\mathbb{R}}
\newcommand{\ep}{\varepsilon}
\renewcommand{\Pr}[1]{\mathbf{P} \left( {#1} \right)}
\renewcommand{\P}{\boldsymbol{\mathcal{P}}}
\newcommand{\Q}{\boldsymbol{\mathcal{Q}}}
\newcommand{\F}{\boldsymbol{\mathcal{F}}}
\newcommand{\mcZ}{\mathcal{Z}}
\newcommand{\hvphi}{{\hat{\varphi}}}
\newcommand{\reliable}[2]{ {R^{#1}\left( #2 \right)} }
\newcommand{\reliablestat}[3]{ {R^{#2}_{#1}\left( #3 \right)} }
\newcommand{\interpretable}[2]{ {I^{#1}\left( #2 \right)} }
\newcommand{\interpretablestat}[3]{ {I^{#2}_{#1}\left( #3 \right)} }
\newcommand{\powerfunc}[2]{ {K^{#2}_{#1}} }
\newcommand{\MIC}{\mbox{MIC}}
\newcommand{\MICestE}{{\mbox{MIC}_e}}
\newtheorem{thm}{Theorem}[section]
\newtheorem*{thm*}{Theorem}
\newtheorem{prop}[thm]{Proposition}
\newtheorem{lemma}[thm]{Lemma}
\newtheorem{cor}[thm]{Corollary}
\theoremstyle{definition}
\newtheorem{definition}[thm]{Definition}
\newcommand{\figpart}[1]{\textbf{({#1})}}
\renewcommand*{\@fnsymbol}[1]{\ensuremath{\ifcase#1\or 1\or *\or \dagger\or 2\or 3\or 4\or **\or 5\or \mathsection\or \mathparagraph\or \|\or \ddagger\or \dagger\dagger
   \or \ddagger\ddagger \else\@ctrerr\fi}}
\title{Equitability, interval estimation, and statistical power}
\author{
Yakir A.\ Reshef\footnote{School of Engineering and Applied Sciences, Harvard University.} \footnote{Co-first author.} \footnote{To whom correspondence should be addressed. Email: \url{yakir@seas.harvard.edu}}
\and
David N. Reshef\footnote{Department of Computer Science, Massachusetts Institute of Technology.} \footnotemark[2]
\and
Pardis C. Sabeti\footnote{Department of Organismic and Evolutionary Biology, Harvard University.} \footnote{Broad Institute of MIT and Harvard.} \footnote{Co-last author.}
\and
Michael M. Mitzenmacher\footnotemark[1] \footnotemark[7]
}
\begin{document}

\maketitle
\begin{abstract}
As data sets grow in dimensionality, non-parametric measures of dependence have seen increasing use in data exploration due to their ability to identify non-trivial relationships of all kinds. One common use of these tools is to test a null hypothesis of statistical independence on all variable pairs in a data set. However, because this approach attempts to identify any non-trivial relationship no matter how weak, it is prone to identifying so many relationships --- even after correction for multiple hypothesis testing --- that meaningful follow-up of each one is impossible. What is needed is a way of identifying a smaller set of ``strongest'' relationships of all kinds that merit detailed further analysis.

Here we formally present and characterize {\em equitability}, a property of measures of dependence that aims to overcome this challenge. Notionally, an equitable statistic is a statistic that, given some measure of noise, assigns similar scores to equally noisy relationships of different types (e.g., linear, exponential, etc.) \cite{MINE}. We begin by formalizing this idea via a new object called the interpretable interval, which functions as an interval estimate of the amount of noise in a relationship of unknown type. We define an equitable statistic as one with small interpretable intervals.

We then draw on the equivalence of interval estimation and hypothesis testing to show that under moderate assumptions an equitable statistic is one that yields well powered tests for distinguishing not only between trivial and non-trivial relationships of all kinds but also between non-trivial relationships of different strengths, regardless of relationship type. This means that equitability allows us to specify a threshold relationship strength $x_0$ below which we are uninterested, and to search a data set for relationships of all kinds with strength greater than $x_0$. Thus, equitability can be thought of as a strengthening of power against independence that enables fruitful analysis of data sets with a small number of strong, interesting relationships and a large number of weaker, less interesting ones. We conclude with a demonstration of how our two equivalent characterizations of equitability can be used to evaluate the equitability of a statistic in practice.
\end{abstract}

\section{Introduction}

Suppose we have a data set that we would like to explore to find pairwise associations of interest. A commonly taken approach that makes minimal assumptions about the structure in the data is to compute a measure of dependence, i.e., a statistic whose population value is non-zero exactly in cases of statistical dependence, on many candidate pairs of variables. The score of each variable pair can be evaluated against a null hypothesis of statistical independence, and variable pairs with significant scores can be kept for follow-up \cite{storey2003statistical,emilsson2008genetics}. When faced with this task, there is a wealth of measures of dependence from which to choose, each with a different set of properties~\cite{reshef2015estimating, szekely2007measuring, Kraskov, breiman1985estimating, hoeffding1948non, heller2013consistent, jiang2014non, gretton2005measuring, gretton2012kernel, lopez2013randomized}.

While this approach works well in some settings, it is unsuitable in many others due to the size of modern data sets. In particular, as data sets grow in dimensionality, the above approach often results in lists of significant relationships that are too large to allow for meaningful follow-up of every identified relationship. For example, in the gene expression data set analyzed in \cite{heller2014consistent}, several measures of dependence reliably identified thousands of significant relationships amounting to between $65$ and $75$ percent of the variable pairs in the data set. Given the extensive manual effort that is usually necessary to better understand each of these ``hits'', further characterizing all of them is impractical.

A tempting way to deal with this challenge is to rank all the variable pairs in a data set according to the test statistic used (or according to p-value) and to examine only a small number of pairs with the most extreme values. However, this is a poor idea because, while a measure of dependence guarantees non-zero scores to dependent variable pairs, the magnitude of these non-zero scores can depend heavily on the type of dependence in question, thereby skewing the top of the list toward certain types of relationships over others. For example, if some measure of dependence $\varphi$ systematically assigns higher scores to, say, linear relationships than to sinusoidal relationships, then using $\varphi$ to rank variable pairs in a large data set could cause noisy linear relationships in the data set to crowd out strong sinusoidal relationships from the top of the list. The natural result would be that the human examining the top-ranked relationships would never see the sinusoidal relationships, and they would not be discovered.

The consistency guarantee of measures of dependence is therefore not strong enough to solve the data exploration problem posed here. What is needed is a way not just to identify as many relationships of different kinds as possible in a data set, but also to identify a small number of strongest relationships of different kinds.

Here we formally present and characterize {\em equitability}, a framework for meeting this goal. In previous work, equitability was informally introduced as follows: an equitable measure of dependence is one that, given some measure of noise, assigns similar scores to equally noisy relationships, regardless of relationship type \cite{MINE}. In this paper, we formalize this notion in the language of estimation theory and tie it to the theory of hypothesis testing.

Specifically, we define an object called the interpretable interval that functions as an interval estimate of the strength of a relationship of unknown type. That is, given a set $\Q$ of standard relationships on which we have defined a measure $\Phi$ of relationship strength, the interpretable interval is a range of values that act as good estimates of the true relationship strength $\Phi$ of a distribution, assuming it belongs to $\Q$. In the same way that a good estimator has narrow confidence intervals, an equitable statistic is one that has narrow interpretable intervals. As we explain, this property can be viewed as a natural generalization of one of the ``fundamental properties'' described by Renyi in his framework for measures of dependence~\cite{renyi1959measures}.

We then draw a connection between equitability and statistical power using the equivalence between interval estimation and hypothesis testing. This connection shows that whereas typical measures of dependence are analyzed in terms of power to distinguish non-trivial associations from statistical independence, under moderate assumptions an equitable statistic is one that can distinguish finely between relationships of two different strengths that may both be non-trivial, regardless of the types of the two relationships in question. This result gives us a new way to understand equitability as a natural strengthening of the requirement of power against independence in which we ask that our statistic be useful not just for detecting deviations of different types from independence but also for distinguishing strong relationships from weak relationships regardless of relationship type.

Finally, motivated by the connection between equitability and power, we define a new property, {\em detection threshold}, which, at some fixed sample size, is the minimal relationship strength $x$ such that a statistic's corresponding independence test has a certain minimal power on relationships of all kinds with strength at least $x$. We show that low detection threshold is strictly weaker than high equitability in that high equitability implies it but the converse does not hold. Therefore, when equitability is too much to ask, low detection threshold on a broad set of relationships with respect to an interesting measure of relationship strength may be a reasonable surrogate goal.

Throughout this paper, we give concrete examples of how our formalism relates to the analysis of equitability in practice. Indeed, the purpose of the theoretical framework provided here is to allow for such practical analyses, and so we close with a demonstration of an empirical analysis of the equitability of several popular measures of dependence.

This paper is accompanied by two companion papers. The first \cite{reshef2015estimating} introduces two new statistics that aim for good equitability on functional relationships and good power against statistical independence, respectively. The second \cite{reshef2015comparisons} conducts a comprehensive empirical analysis of the equitability and power against independence of both of these new methods as well as several other leading measures of dependence.

The results we present here, in addition to contributing to a better understanding of equitability, also provide an organizing framework in which to consolidate some of the recent discussion around equitability. For instance, our formalization of equitability is sufficiently general to accommodate several of variants that have arisen in the literature. This allows us to precisely discuss the definition given by Kinney and Atwal \cite{kinney2014equitability, reshef2014comment} of what, in our theoretical framework, corresponds to perfect equitability. In particular, our framework allows us to explain the limitations of an impossibility result presented by Kinney and Atwal about perfect equitability. Additionally, our framework and the connection it provides to statistical power also allows us to crystallize and address the concerns about the power against independence of equitable methods raised by Simon and Tibshirani \cite{simon2012comment}. (However, empirical questions concerning the performance of the maximal information coefficient and related statistics are deferred to the companion papers \cite{reshef2015comparisons, reshef2015estimating}.)

We conclude with a discussion of what situations benefit from using equitability as a desideratum for data analysis. It is our hope that the theoretical results in this paper will provide a foundation for further work not only on equitability and methods for achieving equitability, but also on other possible expansions of our goals for measures of dependence in the setting of data exploration or other related settings.

\section{Equitability}
\label{sec:equitability}
Equitability has been described informally by the authors as the ability of a statistic to ``give similar scores to equally noisy relationships of different types'' \cite{MINE}. Though useful, this informal definition is imprecise in that it does not specify what is meant by ``noisy'' or ``similar'', and does not specify for which relationships the stated property should hold. In this section we provide the formalism necessary to discuss equitability more rigorously.

To do this, we fix a statistic $\hvphi$ (presumed to be a measure of dependence), a measure of relationship strength $\Phi$ called the {\em property of interest}, and a set $\Q$ of {\em standard relationships} on which $\Phi$ is defined. The idea is that $\Q$ contains relationships of many different types, and for any distribution $\mcZ \in \Q$, $\Phi(\mcZ)$ is the way we would ideally quantify the strength of $\mcZ$ if we had knowledge of the distribution $\mcZ$. Our goal is then, given a sample $Z$ of size $n$ from $\mcZ$, to use $\hvphi(Z)$ to draw inferences about $\Phi(\mcZ)$.

Our general approach is to construct a set of intervals, the {\em interpretable intervals} of $\hvphi$ with respect to $\Phi$, by inverting a certain set of hypothesis tests. We show that these intervals can be used to turn $\hvphi(Z)$ into an interval estimate of $\Phi(\mcZ)$, and we call the statistic $\hvphi$ equitable if its interpretable intervals are small, i.e., if it yields narrow interval estimates of $\Phi(\mcZ)$.

After constructing the interpretable intervals of $\hvphi$ with respect to $\Phi$, we demonstrate how our vocabulary can be used to define a few different concrete instantiations of the concept of equitability. We do this by using our framework to state several of the notions of- and results about equitability that have appeared in the literature, and discussing the relationships among them. Following this, we provide a short schematic illustration of how the definitions we provide would be used to quantitatively evaluate the equitability of a statistic in practice, and a discussion of how equitability is related to measurement of effect size more generally.

In what follows, we keep our exposition generic in order to accommodate variations \--- both existing and potential \--- on the concepts defined here. However, as a motivating example, we often return to the setting of \cite{MINE}, in which $\hvphi$ is a statistic like the maximal information coefficient $\MICestE$, $\Q$ is a set of noisy functional relationships, and $\Phi$ is the coefficient of determination ($R^2$) with respect to the generating function. In this setting, the equitability of $\MICestE$ corresponds to its utility for constructing narrow interval estimates of the $R^2$ of a relationship that is in $\Q$ but whose specific functional form is unknown.

\subsection{Interpretable intervals}
Let $\hvphi$ be a statistic taking values in $[0,1]$, let $\Q$ be a set of distributions, and let $\Phi : \Q \rightarrow [0,1]$ be some measure of relationship strength. As mentioned previously, we refer to $\Q$ as the set of standard relationships and to $\Phi$ as the property of interest. To construct the interpretable intervals of $\hvphi$ with respect to $\Phi$, we must first ask how much $\hvphi$ can vary when evaluated on a sample from some $\mcZ \in \Q$ with $\Phi(\mcZ) = x$. The definition below gives us a way to measure this. (In this definition and in definitions in the rest of this paper, we implicitly assume a fixed sample size of $n$.)

\begin{definition}[Reliability of a statistic]
Let $\hvphi$ be a statistic taking values in $[0,1]$, and let $x, \alpha \in [0,1]$. The $\alpha$-reliable interval of $\hvphi$ at $x$, denoted by $\reliablestat{\alpha}{\hvphi}{x}$, is the smallest closed interval $A$ with the property that, for all $\mcZ \in \Q$ with $\Phi(\mcZ) = x$, we have
\[
\Pr{\hvphi(Z) < \min A} < \alpha/2 \quad \mbox{and} \quad \Pr{\hvphi(Z) > \max A} < \alpha/2
\]
where $Z$ is a sample of size $n$ from $\mcZ$.

The statistic $\hvphi$ is {\em $1/d$-reliable} with respect to $\Phi$ on $\Q$ at $x$ with probability $1-\alpha$ if and only if the diameter of $\reliablestat{\alpha}{\hvphi}{x}$ is at most $d$.
\end{definition}

See Figure~\ref{fig:reliabilityInterpretability}a for an illustration. The reliable interval at $x$ is an acceptance region of a size-$\alpha$ test of the null hypothesis $H_0:\Phi(\mcZ) = x$. If there is only one $\mcZ$ satisfying $\Phi(\mcZ) = x$, this amounts to a central interval of the sampling distribution of $\hvphi$ on $\mcZ$. If there is more than one such $\mcZ$, the reliable interval expands to include the relevant central intervals of the sampling distributions of $\hvphi$ on all the distributions $\mcZ$ in question. For example, when $\Q$ is a set of noisy functional relationships with several different function types and $\Phi$ is $R^2$, the reliable interval at $x$ is the smallest interval $A$ such that for any functional relationship $\mcZ \in \Q$ with $R^2(\mcZ) = x$, $\hvphi(Z)$ falls in $A$ with high probability over the sample $Z$ of size $n$ from $\mcZ$.

Because the reliable interval $\reliablestat{\alpha}{\hvphi}{x}$ can be viewed as the acceptance region of a level-$\alpha$ test of $H_0 : \Phi(\mcZ) = x$, the equivalence between hypothesis tests and confidence intervals yields interval estimates of $\Phi$ in terms of $\reliablestat{\alpha}{\hvphi}{x}$. These intervals are the interpretable intervals, defined below.
\begin{definition}[Interpretability of a statistic]
Let $\hvphi$ be a statistic taking values in $[0,1]$, and let $y, \alpha \in [0,1]$. The $\alpha$-interpretable interval of $\hvphi$ at $y$, denoted by $\interpretablestat{\alpha}{\hvphi}{y}$, is the smallest closed interval containing the set
\[ \left\{ x \in [0,1] : y \in \reliablestat{\alpha}{\hvphi}{x} \right\} .\]

The statistic $\hvphi$ is {\em $1/d$-interpretable} with respect to $\Phi$ on $\Q$ at $y$ with confidence $1-\alpha$ if and only if the diameter of $\interpretablestat{\alpha}{\hvphi}{y}$ is at most $d$.
\end{definition}
See Figure~\ref{fig:reliabilityInterpretability}a for an illustration. The correspondence between hypothesis tests and interval estimates~\cite{casella2002statistical} gives us the following guarantee about the coverage probability of the interpretable interval, whose proof we omit.
\begin{prop}
\label{prop:interval_estimates}
Let $\hvphi$ be a statistic taking values in $[0,1]$, and let $\alpha \in [0,1]$. For all $x \in [0,1]$ and for all $\mcZ \in \Q$,
\[
\Pr{\Phi(\mcZ) \in \interpretablestat{\alpha}{\hvphi}{\hvphi(Z)}} \geq 1-\alpha
\]
where $Z$ is a sample of size $n$ from $\mcZ$.
\end{prop}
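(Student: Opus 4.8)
The plan is to derive the coverage guarantee directly from the acceptance-region/confidence-set duality~\cite{casella2002statistical}, taking the reliable intervals $\reliablestat{\alpha}{\hvphi}{x}$ to play the role of the acceptance regions of a family of level-$\alpha$ tests of the hypotheses $H_0 : \Phi(\mcZ) = x$, and then inverting.

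First I would fix an arbitrary distribution $\mcZ \in \Q$, write $x = \Phi(\mcZ)$, and let $Z$ be a sample of size $n$ from $\mcZ$. Since $\Phi(\mcZ) = x$, the definition of the $\alpha$-reliable interval applies to this $\mcZ$ and yields the two tail bounds
\[
\Pr{\hvphi(Z) < \min \reliablestat{\alpha}{\hvphi}{x}} < \alpha/2 \quad\text{and}\quad \Pr{\hvphi(Z) > \max \reliablestat{\alpha}{\hvphi}{x}} < \alpha/2 .
\]
Because $\reliablestat{\alpha}{\hvphi}{x}$ is a closed interval, the complement of the event $\{\hvphi(Z) \in \reliablestat{\alpha}{\hvphi}{x}\}$ is exactly the union of the two events above, so a union bound gives $\Pr{\hvphi(Z) \in \reliablestat{\alpha}{\hvphi}{x}} > 1 - \alpha$.

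The second and central step is the pointwise inclusion of events
\[
\bigl\{\, \hvphi(Z) \in \reliablestat{\alpha}{\hvphi}{x} \,\bigr\} \;\subseteq\; \bigl\{\, x \in \interpretablestat{\alpha}{\hvphi}{\hvphi(Z)} \,\bigr\},
\]
which is precisely the inversion step. To see it, consider any outcome for which $y := \hvphi(Z)$ satisfies $y \in \reliablestat{\alpha}{\hvphi}{x}$. Then $x$ lies in the set $\{\, x' \in [0,1] : y \in \reliablestat{\alpha}{\hvphi}{x'} \,\}$, and since $\interpretablestat{\alpha}{\hvphi}{y}$ is by definition the smallest closed interval containing that set, we conclude $x \in \interpretablestat{\alpha}{\hvphi}{y}$. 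Combining this inclusion with the probability bound from the previous paragraph gives
\[
\Pr{\Phi(\mcZ) \in \interpretablestat{\alpha}{\hvphi}{\hvphi(Z)}} \;\geq\; \Pr{\hvphi(Z) \in \reliablestat{\alpha}{\hvphi}{x}} \;>\; 1-\alpha,
\]
and since $\mcZ \in \Q$ was arbitrary this is the claim (the quantifier over $x$ in the statement plays no role, as $x$ does not appear in the displayed event).

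No step here is a genuine obstacle — the whole argument is the textbook confidence-interval/hypothesis-test correspondence — but the one spot that rewards care is the interplay between the strict inequalities in the definition of reliability and the closedness of the reliable interval: because the tail bounds are stated as ``$< \min$'' and ``$> \max$'', the endpoints of $\reliablestat{\alpha}{\hvphi}{x}$ count as lying inside the acceptance region, which is exactly what makes the union bound come out cleanly (and even gives the slightly stronger strict inequality). It is also worth noting that the proof uses nothing about the internal structure of $\Q$, $\Phi$, or $\hvphi$; in particular it does not assume that $x$ is attained by a unique $\mcZ$, since $\reliablestat{\alpha}{\hvphi}{x}$ was defined to control the sampling behavior of $\hvphi$ simultaneously on \emph{all} $\mcZ \in \Q$ with $\Phi(\mcZ) = x$.
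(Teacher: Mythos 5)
Your proof is correct and is exactly the argument the paper has in mind: the paper omits the proof, citing only the standard correspondence between acceptance regions and confidence sets, and your write-up is precisely that test-inversion argument (tail bounds on the reliable interval, union bound, then the pointwise event inclusion $\{\hvphi(Z) \in \reliablestat{\alpha}{\hvphi}{x}\} \subseteq \{x \in \interpretablestat{\alpha}{\hvphi}{\hvphi(Z)}\}$). Your closing remarks --- that the strict tail inequalities plus closedness of the reliable interval make the bound come out as a strict inequality, and that no uniqueness of $\mcZ$ given $\Phi(\mcZ)=x$ is needed because the reliable interval controls all such $\mcZ$ simultaneously --- are both accurate.
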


The definitions just presented have natural non-stochastic counterparts in the large-sample limit that we summarize below.
\begin{definition}[Reliability and interpretability in the large-sample limit]
Let $\varphi : \Q \rightarrow [0,1]$ be a function of distributions. For $x \in [0,1]$, the smallest closed interval containing the set $\varphi(\Phi^{-1}(\{x\}))$ is called the {\em reliable interval} of $\varphi$ at $x$ and is denoted by $\reliable{\varphi}{x}$. For $y \in [0,1]$, the smallest closed interval containing the set $\{ x : y \in \reliable{\varphi}{x} \}$ is called the {\em interpretable interval} of $\varphi$ at $y$ and is denoted by $\interpretable{\varphi}{y}$.
\end{definition}
See Figure~\ref{fig:reliabilityInterpretability}b for an illustration.

\begin{figure}[h]
	\centering
	\begin{tabular}[c]{cc}
        \includegraphics[clip=true, trim = 0.35in 0.25in 0in 0.625in, width=0.4\textwidth]{\pathToCommonFigs/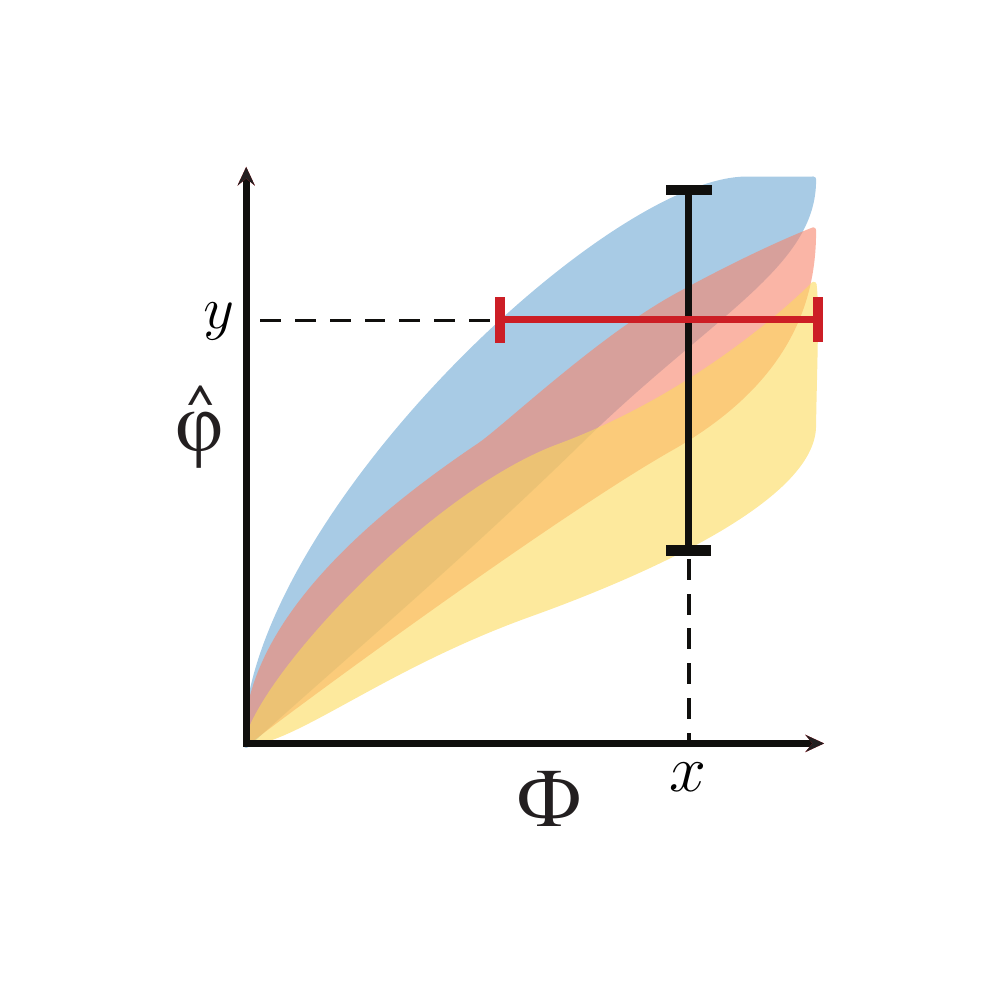}
                  &
        \includegraphics[clip=true, trim = 0.35in 0.25in 0in 0.625in, width=0.4\textwidth]{\pathToCommonFigs/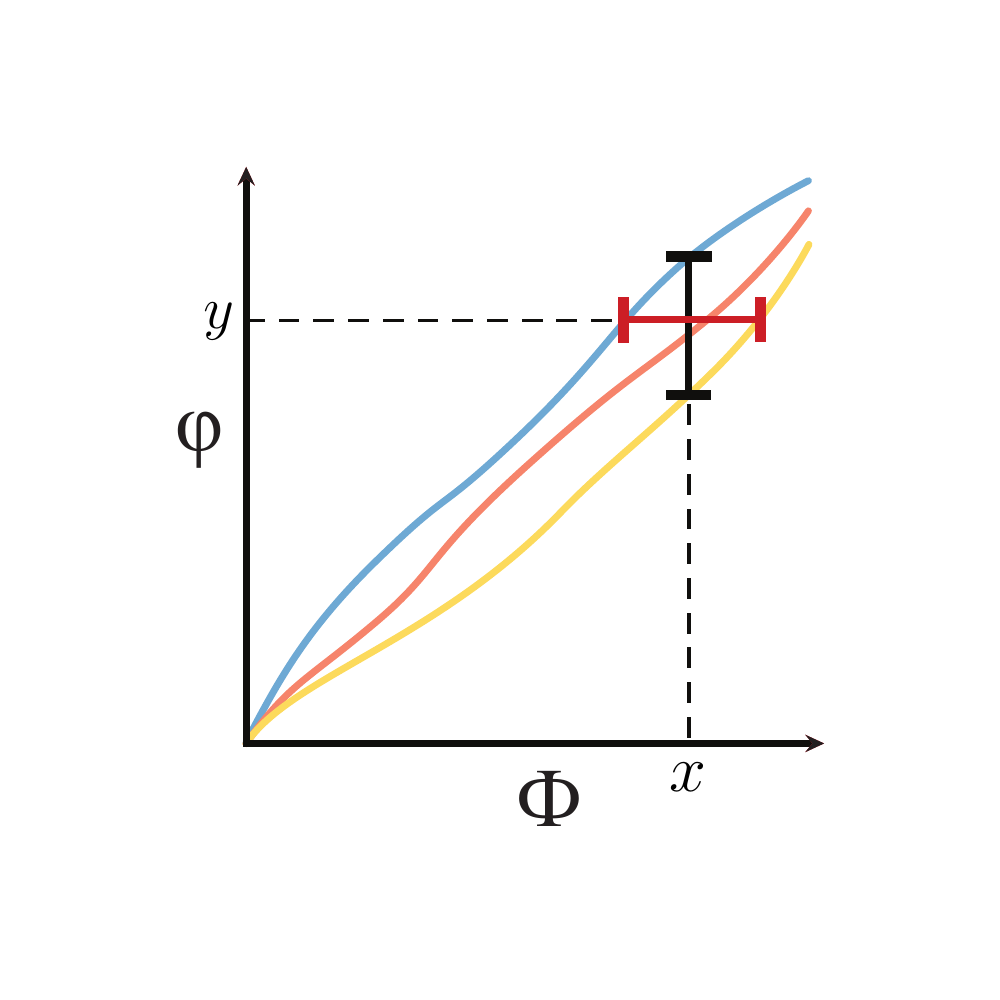} \\
       (a) & (b) \\
   \end{tabular}
    
  \caption{A schematic illustration of reliable and interpretable intervals. In both figure parts, $\Q$ consists of noisy relationships of three different types depicted in the three different colors.
\figpart{a} The relationship between a statistic $\hvphi$ and $\Phi$ on $\Q$ at a finite sample size. The bottom and top boundaries of each shaded region indicate the $(\alpha/2) 100\%$ and $(1-\alpha/2)\cdot 100\%$ percentiles of the sampling distribution of $\hvphi$ for each relationship type at various values of $\Phi$. The vertical interval (in black) is the reliable interval $\reliablestat{\alpha}{\hvphi}{x}$, and the horizontal interval (in red) is the interpretable interval $\interpretablestat{\alpha}{\hvphi}{y}$.
\figpart{b} In the large-sample limit, we replace $\hvphi$ with a population quantity $\varphi$. The vertical interval (in black) is the reliable interval $\reliable{\varphi}{x}$, and the horizontal interval (in red) is the interpretable interval $\interpretable{\varphi}{y}$.}
  \label{fig:reliabilityInterpretability}
\end{figure}

\subsection{Defining equitability}
Proposition~\ref{prop:interval_estimates} implies that if the interpretable intervals of $\hvphi$ with respect to $\Phi$ are small then $\hvphi$ will give good interval estimates of $\Phi$. There are many ways to summarize whether the interpretable intervals of $\hvphi$ are small; we focus here on two simple ones.

\begin{definition}
The worst-case $\alpha$-reliability (resp. $\alpha$-interpretability) of $\hvphi$ is $1/d$ if it is $1/d$-reliable (resp. interpretable) at all $x$ (resp. $y$) $\in [0,1]$. $\hvphi$ is said to be {\em worst-case $1/d$-reliable} (resp. {\em $1/d$-interpretable}) with probability (resp. confidence) $1-\alpha$.

The average-case $\alpha$-reliability (resp. $\alpha$-interpretability) of $\hvphi$ is $1/d$ if its reliability (resp. interpretability), averaged over all $x$ (resp. $y$) $\in [0,1]$, is at least $1/d$. $\hvphi$ is said to be {\em average-case $1/d$-reliable} (resp. {\em $1/d$-interpretable}) with probability (resp. confidence) $1-\alpha$.
\end{definition}
(One could imagine more fine-grained ways to summarize reliability/interpretability according to, for example, some prior over the distributions in $\Q$ that reflects a belief about the importance or prevalence of various types of relationships; for simplicity, we do not pursue this here.)

With this vocabulary, we can now define equitability: {\em average/worst-case equitability} is simply average/worst-case interpretability with respect to some $\Phi$ that reflects relationship strength. In this paper, we distinguish between interpretability in general and equitability specifically by using ``interpretability'' in general statements and ``equitability'' in contexts in which $\Phi$ is specifically considered as a measure of relationship strength. Also, we often use ``interpretability'' and ``equitability'' with no qualifier to mean worst-case interpretability/equitability.

The corresponding definitions of average/worst-case interpretability/reliability can be made for $\varphi$ in the large-sample limit as well. In that setting, it is possible that all the interpretable intervals of $\varphi$ with respect to $\Phi$ have size 0; that is, the value of $\varphi(\mcZ)$ uniquely determines the value of $\Phi(\mcZ)$. In this case, the worst-case reliability/interpretability of $\varphi$ is $\infty$, and $\varphi$ is said to be {\em perfectly reliable/interpretable}, or {\em perfectly equitable} depending on context.

Before continuing, let us build intuition by giving two examples of statistics that are perfectly interpretable in the large-sample limit. First, the mutual information \cite{Cover2006, csiszar2008axiomatic} is perfectly interpretable with respect to the correlation $\rho^2$ on the set $\Q$ of bivariate normal random variables. This is because for bivariate normals we have that $1-2^{-2I} = \rho^2$ \cite{linfoot1957informational}. Additionally, Theorem 6 of~\cite{szekely2009brownian} shows that for bivariate normals distance correlation is a deterministic function of $\rho^2$ as well. Therefore, distance correlation is also perfectly interpretable and perfectly reliable with respect to $\rho^2$ on the set of bivariate normals $\Q$.

The perfect interpretability with respect to $\rho^2$ on bivariate normals exhibited in both of these examples is in fact equivalent to one of the ``fundamental properties'' introduced by Renyi in his framework for thinking about ideal properties of measures of dependence \cite{renyi1959measures}. This property contains a compromise: it guarantees interpretability that on the one hand is perfect, but on the other hand applies only on a relatively small set of standard relationships. One goal of equitability is to give us the tools to relax the ``perfect'' requirement in exchange for the ability to make $\Q$ a much larger set, e.g., a set of noisy functional relationships. Thus, equitability can be viewed as a generalization of Renyi's requirement that allows for a tradeoff between the precision with which our statistic tells us about $\Phi$ and the set $\Q$ on which it does so.

\subsection{Examples of- and results about equitability}
We now give examples, using the vocabulary developed here, of some concrete instantiations of- and results about equitability. Our focus here is on functional relationships, as defined below.

\begin{definition}
A random variable distributed over $\R^2$ is called a {\em noisy functional relationship} if and only if it can be written in the form $(X + \ep, f(X) + \ep')$ where $f : [0,1] \rightarrow \R$, $X$ is a random variable distributed over $[0,1]$, and $\ep$ and $\ep'$ are (possibly trivial) random variables. We denote the set of all noisy functional relationships by $\F$.
\end{definition}

\subsubsection{Equitability on functional relationships with respect to $R^2$}
We can now state one specific type of equitability on functional relationships: equitability with respect to $R^2$.
\begin{definition}[Equitability on functional relationships with respect to $R^2$]
Let $\Q \subset \F$ be a set of noisy functional relationships. A measure of dependence is {\em $1/d$-equitable} on $\Q$ with respect to $R^2$ if it is $1/d$-interpretable with respect to $R^2$ on $\Q$.
\end{definition}

We observe that this definition still depends on the set $\Q$ in question. The general approach taken in the literature thus far has been to fix some set $F$ of functions that on the one hand is large enough to be representative of relationships encountered in real data sets, but on the other hand is small enough to enable empirical analysis, and to make equitability a realistic goal.

As important as the choice of functions to include in $F$ is the choice of marginal distributions and noise model, both of which are left unspecified in our definition of noisy functional relationships. In past work, we have examined several possibilities. The simplest is $X \sim \mbox{Unif}$, $\ep' \sim \mathcal{N}(0, \sigma^2)$ with $\sigma$ varying, and $\ep = 0$. Slightly more complex noise models include having $\ep$ and $\ep'$ i.i.d. Gaussians, or having $\ep$ be Gaussian and $\ep' = 0$. More complex marginal distributions include having $X$ be distributed in a way that depends on the graph of $f$, or having it be non-stochastic \cite{MINE, reshef2015comparisons}. Given that we often lack a neat description of the noise in real data sets, we would ideally like a statistic to be highly equitable on as many different such models as possible.

We can also easily imagine models besides the ones described above: for instance, we might define $\ep_a$ and $\ep_b$ to be non-Gaussian, we might allow them to depend on each other, or we might allow their variance to depend on $f(X)$. The importance of such modifications depends on the context, but our formalism is designed to be flexible enough to handle general models that include such variations.

\subsubsection{A setting in which perfect equitability is impossible}
One version of equitability on functional relationships for which perfect equitability has been shown to be impossible was introduced by Kinney and Atwal \cite{kinney2014equitability}. This version of equitability uses as standard relationships the set
\[
\Q_K = \left\{ (X, f(X) + \eta) \mbox{ } \big| \mbox{ } f : [0,1] \rightarrow [0,1], (\eta \perp X) | f(X)\right\}
\]
with $\eta$ representing a random variable that is conditionally independent of $X$ given $f(X)$. This model describes functional relationships with noise in the second coordinate only, where that noise can depend arbitrarily on the value of $f(X)$ but must be otherwise independent of $X$.

Kinney and Atwal prove that no non-trivial measure of dependence can be perfectly worst-case interpretable with respect to $R^2$ on the set $\Q_K$. However, we note here that this result, while interesting, has two serious limitations. The first limitation, pointed out by Murrell \textit{et al.} in the technical comment~\cite{Murrell2014comment}, is that $\Q_K$ is extremely large: in particular, the fact that the noise term $\eta$ can depend arbitrarily on the value of $f(X)$ leads to identifiability issues such as obtaining the noiseless relationship $f(X) = X^2$ as a noisy version of $f(X) = X$. The more permissive (i.e. large) a model is, the easier it is to prove an impossibility result for it. Since $\Q_K$ is not contained in the other major models considered in, e.g., \cite{MINE} and \cite{reshef2015comparisons}, it follows that this impossibility result does not imply impossibility for any of those models.

The second limitation of Kinney and Atwal's result is that it only addresses {\em perfect} equitability rather than the more general, approximate notion with which we are primarily concerned.\footnote{
As a matter of record, we wish to clarify a confusion in Kinney and Atwal's work. They write ``The key claim made by Reshef \textit{et al.} in arguing for the use of MIC as a dependence measure has two parts. First, MIC is said to satisfy not just the heuristic notion of equitability, but also the mathematical criterion of $R^2$-equitability...'', with the latter term referring to what we here define as perfect equitability~\cite{kinney2014equitability}. However, such a claim was never made in our previous work \cite{MINE}. Rather, that paper \cite{MINE} informally defined equitability as an approximate notion and compared the equitability of $\MIC$, mutual information estimation, and other schemes empirically, concluding not that $\MIC$ is perfectly equitable but rather that it is the most equitable statistic available in a variety of settings. One method can be more equitable than another, even if neither method is perfectly equitable.
} While a statistic that is perfectly equitable with respect to $R^2$ may indeed be difficult or even impossible to achieve for many large models $\Q$ including some of the models in \cite{MINE} and \cite{reshef2015comparisons}, such impossibility would make {\em approximate} equitability no less desirable a property. The question thus remains how equitable various measures are, both provably and empirically. To borrow an analogy from computer science, the fact that a problem is proven to be NP-complete does not mean that we that we do not want efficient algorithms for the problem; we simply may have to settle for approximate solutions. Similarly, there is merit in searching for measures of dependence that appear to be highly equitable with respect to $R^2$ in practice.

For more on this discussion, see the technical comment~\cite{reshef2014comment}.

\subsection{Quantifying equitability via interpretable intervals}
\label{sec:equitabilityExample}
Let us give a simple demonstration of how the formalism above can be used to empirically quantify equitability with respect to $R^2$ on a specific set of noisy functional relationships. We take as our statistic the sample correlation $\hat \rho$. Since this statistic is meant to detect linear dependencies, we do not expect it to be equitable on a broad class of relationships. In fact it is not even a measure of dependence, since its population value can be zero for relationships with non-trivial dependence. However, we analyze it here as an instructional example since it is widely used and gives intuitive scores. We analyze the equitability of other statistics in Section~\ref{sec:equitabilityAnalysis}.

Figure~\ref{fig:equitabilityExample}a shows an analysis of the equitability with respect to $R^2$ of $\hat \rho$ at a sample size of $n=500$ on the set
\[
\Q = \{ \left( X, f(X) + \ep'_\sigma \right) : X \sim \mbox{Unif}, \ep'_\sigma \sim \mathcal{N}(0, \sigma^2), f \in F, \sigma \in \R_{\geq 0} \}
\]
where $F$ is a set of 16 functions analyzed in \cite{reshef2015comparisons}. (See Appendix~\ref{app:analysis_details}.)

To evaluate the equitability of $\hat \rho$ in this context, we generate, for each function $f \in F$ and for 41 noise levels chosen for each function to correspond to $R^2$ values uniformly spaced in $[0,1]$, $500$ independent samples of size $n=500$ from the relationship $Z_{f, \sigma} = (X, f(X) + \ep'_\sigma)$. We then evaluate $\hat \rho$ on each sample to estimate the 5th and 95th percentiles of the sampling distribution of $\hat \rho$ on $Z_{f, \sigma}$. By taking, for each $\sigma$, the maximal 95th percentile value and the minimal 5th percentile value across all $f \in F$, we obtain estimates of the $0.1$-reliable interval at each noise level. From the reliable intervals we can then construct interpretable intervals, and the equitability of $\hat \rho$ is the reciprocal of the length of the largest interpretable interval.

As expected, the interpretable intervals at many values of $\hat \rho$ are large. This is because our set of functions $F$ contains many non-linear functions, and so a given value of $\hat \rho$ can be assigned to relationships of different types with very different $R^2$ values. This is shown by the pairs of thumbnails in the figure, each of which depicts two relationships with the same $\hat \rho$ but different values of $R^2$. Thus, $\hat \rho$ has poor equitability with respect to $R^2$ on this set $\Q$. In contrast, Figure~\ref{fig:equitabilityExample}b depicts the way this analysis would look if $\rho$ were {\em perfectly} equitable: all the interpretable intervals would have size 0.

\begin{figure}[t]
	\centering
	\begin{tabular}{@{}cc@{}} 
	    \includegraphics[clip=true, trim = 0in 6in 4.32in 2in, width=0.49\textwidth]{\pathToCommonFigs/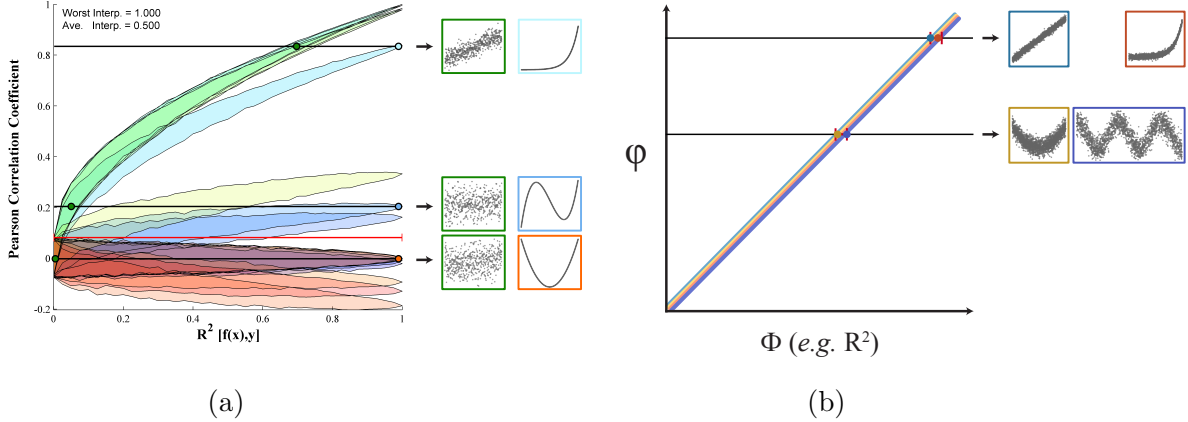}
	    &
	    \includegraphics[clip=true, trim = 4.41in 6in 0in 2in, width=0.48\textwidth]{\pathToCommonFigs/WhatIsEquitability_EquitabilityPaper_v2.pdf} \\
	    (a)\hspace{0.7in} & (b)\hspace{0.8in}
	\end{tabular}
	\caption[Examples of equitable and non-equitable behavior]{
	    Examples of equitable and non-equitable behavior on a set of noisy functional relationships.
    	\figpart{a} The equitability with respect to $R^2$ of the Pearson correlation coefficient $\hat\rho$ over the set $\Q$ of relationships described in Section~\ref{sec:equitabilityExample}, with $n=500$. Each shaded region is an estimated $90\%$ central interval of the sampling distribution of $\hat\rho$ for a given relationship at a given $R^2$. The fact that the interpretable intervals of $\hat\rho$ are large indicates that a given $\hat\rho$ value could correspond to relationships with very different $R^2$ values. This is illustrated by the pairs of thumbnails showing relationships with the same $\hat\rho$ but different $R^2$ values. The largest interpretable interval is indicated by a red line. Because it has width 1, the worst-case equitability with respect to $R^2$ in this case is 1, the lowest possible.
    	\figpart{b} A hypothetical population quantity $\varphi$ that achieves \textit{perfect equitability} in the large-sample limit. Here, the value of $\varphi$ for each relationship type depends only on the $R^2$ of the relationship and increases monotonically with $R^2$. Thus, $\varphi$ can be used as a proxy for $R^2$ on $\Q$ with no loss. Thumbnails are shown for sample relationships that have the same $\varphi$, which corresponds to the fact that they have equal $R^2$ scores. See Appendix~\ref{app:analysis_details} for a legend of the function types used.
	}
	\label{fig:equitabilityExample}
\end{figure}

\subsection{Discussion}
In this section we formalized the notion of equitability via the concepts of reliability and interpretability. Given a statistic $\hvphi$ and a measure of relationship strength $\Phi$ defined on some set $\Q$ of standard relationships, we constructed a set of intervals called the interpretable intervals of $\hvphi$ with respect to $\Phi$. We constructed the interpretable intervals so they yield interval estimates of $\Phi$, and we then defined the (worst-case) equitability of $\hvphi$ to be the inverse of the size of the largest interpretable interval.

Strictly speaking, equitability simply requires that a natural set of confidence intervals obtained from analyzing $\hvphi$ as an estimator of $\Phi$ be small. However, there is a subtlety here: since in our setting $\Q$ typically contains several different relationship types, there are usually multiple relationships in $\Q$ with a given value of $\Phi$. This is different from the conventional framework of estimation of a parameter $\theta$, in which we assume that there is exactly one distribution with any given value of $\theta$, and we must account for this difference in our definitions.

When $\Q$ is so small that this subtlety does not arise, equitability becomes a less rich property. To see this, notice that if there is only one relationship in $\Q$ for every value of $\Phi$, then asymptotic monotonicity of $\hvphi$ with respect to $\Phi$ is sufficient for perfect equitability in the large-sample limit. In this scenario, the main obstacle to the equitability of $\hvphi$ is finite-sample effects, as with parameter estimation. For example, on the set $\Q$ of bivariate Gaussians, many measures of dependence are asymptotically perfectly equitable with respect to the correlation.

However, this differs from the motivating data exploration scenario we consider, in which $\Q$ contains many different relationship types and there are multiple different relationships corresponding to a given value of $\Phi$. Here, equitability can be hindered either by finite-sample effects, or by the differences in the asymptotic behavior of $\hvphi$ on different relationship types in $\Q$. This is illustrated in Figure~\ref{fig:analogy}.

\begin{figure}[t]
	\centering
    \includegraphics[clip=true, trim = 0in 4in 0in 0in, height=0.4\textheight]{\pathToCommonFigs/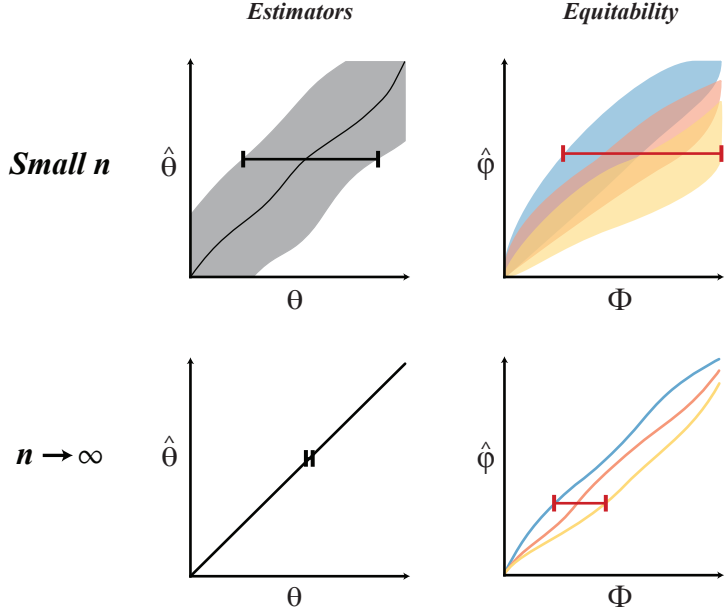}
  \caption{Equitability versus parameter estimation.
The left-hand column depicts a scenario in which $\hat{\theta}$ estimates a parameter $\theta$, each value of which specifies a unique distribution. If the population value of $\hat \theta$ is monotonic in $\theta$, then the confidence intervals shown can be large only due to finite-sample effects.
The right-hand column depicts a scenario in which $\hvphi$ is being used as an estimate of $\Phi$, but a given value of $\Phi$ does not uniquely determine the population value of $\hvphi$: the blue, red, and yellow each represent distinct sets of distributions in $\Q$ whose members can have identical values of $\Phi$. For instance, they might correspond to different function types. This is the setting in which we are operating, and the red intervals on the right are called interpretable intervals. Interpretable intervals can be large either because of finite sample effects (as in the conventional estimation case) or because of the lack of interpretability of the population value of the statistic (shown in the bottom-right picture).} \label{fig:analogy}
\end{figure}

Regardless of the size of $\Q$ though, equitability is fundamentally meant for a situation in which we cannot simply estimate $\Phi$ directly. (In fact, if $\hvphi$ is a consistent estimator of $\Phi$ on $\Q$, it is trivially perfectly equitable in the large-sample limit.) This is because in data exploration we typically require that $\hvphi$ be a measure of dependence in order to obtain a minimal robustness guarantee, and this requirement makes it very difficult to make $\hvphi$ a consistent estimator of $\Phi$ on a large set $\Q$. For instance, suppose $\Q$ is a set of noisy functional relationships and $\Phi = R^2$. Here, computing the sample $R^2$ relative to a non-parametric estimate of the generating function will be asymptotically perfectly equitable. However, this approach is undesirable for data exploration because of its lack of robustness, as exemplified by the fact that it would assign a score of zero to, e.g., a circular relationship. Therefore, we are left with the problem of finding the next-best thing: a measure of dependence $\hvphi$ whose values have a clear, if approximate, interpretation in terms of $\Phi$. Equitability supplies us with a way of talking about how well $\hvphi$ does in this regard.

We close this section with the observation that, though we largely focused here on setting $\Q$ to be some set of noisy functional relationships, the appropriate definitions of $\Q$ and $\Phi$ may change from application to application. For instance, instead of functional relationships one may be interested in relationships supported on one-manifolds, with added noise. Or perhaps instead of $R^2$ one may decide to focus on the mutual information between the sampled y-values and the corresponding de-noised y-values \cite{kinney2014equitability}, or on the fraction of deterministic signal in a mixture \cite{ding2013copula}. In each case the overarching goal should be to have $\Q$ be as large as possible without making it impossible to define an interesting $\Phi$ or making it impossible to find a measure of dependence that achieves good equitability on $\Q$ with respect to this $\Phi$. Finding such families $\Q$ and properties $\Phi$ is an important avenue of future work.

\section{Equitability and statistical power}
\label{sec:equitAndPower}
In the previous section we defined equitability in terms of interval estimation, and observed that the interpretable intervals of a statistic $\hvphi$ with respect to a property of interest $\Phi$ yield interval estimates of $\Phi$ on a set of distributions $\Q$. Given our construction of interpretable intervals via inversion of a set of hypothesis tests, it becomes natural to ask whether there is any connection between equitability and the power of those tests with respect to specific alternatives.

In this section we answer this question by showing that equitability can be equivalently formulated in terms of power with respect to a family of null hypotheses corresponding to different relationship strengths. This result re-casts equitability as a strengthening of power against statistical independence on $\Q$ and gives a second formal definition of equitability that is easily quantifiable using standard power analysis.

Henceforth, we fix the statistic $\hvphi$ and then use $\reliablestat{\alpha}{}{x}$ to denote the $\alpha$-reliable interval of $\hvphi$ at $x \in [0,1]$ and $\interpretablestat{\alpha}{}{x}$ to denote the $\alpha$-interpretable interval of $\hvphi$ at $y \in [0,1]$.

\subsection{Intuition}
Before stating and proving the relationship between equitability and power, let us first build some intuition for why it should hold. We begin by recalling that the reliable interval $\reliablestat{\alpha}{}{x_0}$ is an acceptance region of a two-sided level-$\alpha$ test of $H_0 : \Phi(\mcZ) = x_0$. Since the interval estimates obtained by inverting this test are the interpretable intervals of $\hvphi$, it makes sense to ask whether there is any property of these hypothesis tests that improves as the interpretability of the statistic $\hvphi$ increases. To see why the relevant property is power, let us consider the following illustrative question: what is the minimal $x_1 > 0$ such that a right-tailed\footnote{
We consider a one-sided test here, and henceforth in this section. The reason is because in practice when $\Phi$ corresponds to relationship strength, we are interested in rejecting a null hypothesis representing weaker relationships. In such a situation, it is more common to perform a one-sided test. Nevertheless, results similar to those shown in this section can be derived for two-sided tests as well.
} level-$\alpha$ test of $H_0 : \Phi = 0$ will have power at least $1-\beta$ on $H_1 : \Phi = x_1$? As shown graphically in Figure~\ref{fig:equitabilityAndPowerAgainstIndep}, the answer can be stated in terms of the reliable and interpretable intervals of $\hvphi$.

Specifically, if $t_\alpha$ is the maximal element of $\reliablestat{2\alpha}{}{0}$, then the minimal value of $\Phi$ at which a right-tailed test based on $\hvphi$ will achieve power $1-\beta$ is $\Phi = \max \interpretablestat{2\beta}{}{t_\alpha}$, i.e., the maximal element of the $\beta$-interpretable interval at $t_\alpha$. So if the statistic is highly interpretable at $t_\alpha$, then we will be able to achieve high power against very small departures from the null hypothesis of independence. That is, good interpretability on $\Q$ implies good power against independence on $\Q$. It turns out that this reasoning holds in general and in both directions, as we establish below.

\begin{figure}
	\centering
	\includegraphics[clip=true, trim = 0in 7.6in 3.15in 0in, height=0.2\textheight]{\pathToCommonFigs/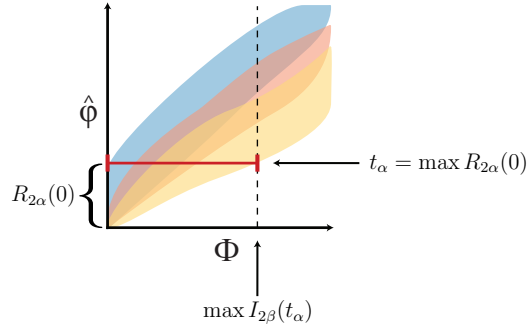}
	\caption[Intuition for the connection between equitability and power]{An illustration of the connection between equitability and power. In this example, we ask for the minimal $x > 0$ that allows a right-tailed test based on $\hvphi$ to achieve power $1-\beta$ in distinguishing between $H_0: \Phi = 0$ and $H_1: \Phi = x$. The optimal critical value of such a test, denoted by $t_\alpha$, can be shown to be the maximal element of the reliable interval $\reliablestat{2\alpha}{}{0}$, and the required $x$ can be shown to be the maximal element of the interpretable interval $\interpretablestat{2\beta}{}{t_\alpha}$, provided $\max \reliablestat{\alpha}{}{\cdot}$ is an increasing function. (The reliable and interpretable intervals pictured are for the case that $\alpha = \beta$.)}
	\label{fig:equitabilityAndPowerAgainstIndep}
\end{figure}

\subsection{Definitions}
To be able to state our main result, we need to formally describe how equitability would be formulated in terms of power. This requires two definitions. The first is a definition of a power function that parametrizes the space of possible alternative hypotheses specifically by the property of interest. The second is a definition of a property of this power function called its uncertain interval. It will turn out later than uncertain intervals are interpretable intervals and vice versa.

As before, let $\hvphi$ be a statistic, let $\Q$ be a set of standard relationships, and let $\Phi : \Q \rightarrow [0,1]$ be a property of interest defined on $\Q$. Given a set of right-tailed tests based on the same test statistic, we refer to the one with the smallest critical value as the most {\em permissive} test.

\begin{definition}
Fix $\alpha, x_0 \in [0,1]$, and let $T^{x_0}_\alpha$ be the most permissive level-$\alpha$ right-tailed test based on $\hvphi$ of the (possibly composite) null hypothesis $H_0 : \Phi(\mcZ) = x_0$. For $x_1 \in [0,1]$, define
\[
\powerfunc{\alpha}{x_0}(x_1) = \inf_{\mcZ : \Phi(\mcZ) = x_1} \Pr{T^{x_0}_\alpha(Z)\mbox{ rejects}}
\]
where $Z$ is a sample of size $n$ from $\mcZ$. That is, $\powerfunc{\alpha}{x_0}(x_1)$ is the power of $T_\alpha^{x_0}$ with respect to the composite alternative hypothesis $H_1 : \Phi = x_1$.

We call the function $\powerfunc{\alpha}{x_0} : [0,1] \rightarrow [0,1]$ the {\em level-$\alpha$ power function} associated to $\hvphi$ at $x_0$ with respect to $\Phi$.
\end{definition}

Note that in the above definition our null and alternative hypotheses may be composite since they are based on $\Phi$ and not on a complete parametrization of $\Q$. That is, $\mcZ$ can be one of several distributions with $\Phi(\mcZ) = x_0$ or $\Phi(\mcZ) = x$ respectively.

Under the assumption that $\Phi(\mcZ) = 0$ if and only if $\mcZ$ represents statistical independence, the power function $\powerfunc{\alpha}{0}$ gives the power of optimal level-$\alpha$ right-tailed tests based on $\hvphi$ at distinguishing various non-zero values of $\Phi$ from statistical independence across the different relationship types in $\Q$. One way to view the main result of this section is that the set of power functions at values of $x_0$ {\em besides} 0 contains much more information than just the power of right-tailed tests based on $\hvphi$ against the null hypothesis of $\Phi = 0$, and that this information can be equivalently viewed in terms of interpretable intervals. Specifically, we can recover the interpretability of $\hvphi$ at every $y \in [0,1]$ by considering its power functions at values of $x_0$ beyond 0.

Let us now define the precise aspect of the power functions associated to $\hvphi$ that will allow us to do this.
\begin{definition}
The {\em uncertain set} of a power function $\powerfunc{\alpha}{x_0}$ is the set $\{x_1 \geq x_0 : \powerfunc{\alpha}{x_0}(x_1) < 1-\alpha \}$.
\end{definition}
The main result of this section will be that uncertain sets are interpretable intervals and vice versa.

\subsection{Preliminary lemmas}
Our proof of the alternate characterization of equitability in terms of power requires two short lemmas. The first shows a connection between the maximum element of a reliable interval and the minimal element of an interpretable interval, namely that these two operations are inverses of each other.

\begin{lemma}
\label{lem:reliable_interpretable_inverse}
Given a statistic $\hvphi$, a property of interest $\Phi$, and some $\alpha \in [0,1]$, define $f(x) = \max \reliablestat{\alpha}{}{x}$ and $g(y) = \min \interpretablestat{\alpha}{}{y}$. If $f$ is strictly increasing, then $f$ and $g$ are inverses of each other.
\end{lemma}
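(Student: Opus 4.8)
The plan is to unwind both definitions and reduce the statement to the single identity $g\circ f=\mathrm{id}_{[0,1]}$, after which the other composition is essentially automatic. Write $S_y=\{x\in[0,1]:y\in\reliablestat{\alpha}{}{x}\}$; by definition $\interpretablestat{\alpha}{}{y}$ is the smallest closed interval containing $S_y$, so $g(y)=\inf S_y$, and since each $\reliablestat{\alpha}{}{x}$ is a closed interval, the condition $y\in\reliablestat{\alpha}{}{x}$ is the same as $\min\reliablestat{\alpha}{}{x}\le y\le f(x)$. I would also record at the outset that strict monotonicity makes $f$ injective, which is what permits an inverse to exist at all.

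First I would show $g(f(x))=x$ for every $x$. For $g(f(x))\le x$: since $\reliablestat{\alpha}{}{x}$ is closed, $f(x)=\max\reliablestat{\alpha}{}{x}$ belongs to $\reliablestat{\alpha}{}{x}$, so $x\in S_{f(x)}$ and hence $g(f(x))=\inf S_{f(x)}\le x$. For $g(f(x))\ge x$: if some $x'<x$ were in $S_{f(x)}$, i.e.\ $f(x)\in\reliablestat{\alpha}{}{x'}$, then $f(x)\le\max\reliablestat{\alpha}{}{x'}=f(x')$, contradicting $f(x')<f(x)$; so no point strictly below $x$ lies in $S_{f(x)}$ and $\inf S_{f(x)}\ge x$. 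Combining gives $g(f(x))=x$, and the infimum is attained, so the ``$\min$'' in the definition of $g$ is legitimate at $f(x)$. I expect this to be the only genuinely delicate step, and the delicacy is precisely the use of \emph{strict} rather than weak monotonicity: on a flat stretch one would have $f(x')=f(x)\in\reliablestat{\alpha}{}{x'}$ for some $x'<x$, forcing $g(f(x))\le x'<x$ and breaking the equality.

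Given $g\circ f=\mathrm{id}$, the reverse composition follows on the range of $f$: if $y=f(x_0)$ then $g(y)=g(f(x_0))=x_0$, hence $f(g(y))=f(x_0)=y$. Thus $f\colon[0,1]\to f([0,1])$ and $g$ restricted to $f([0,1])$ are mutually inverse bijections, which is the content of the lemma; I would phrase the conclusion in this ``on the range of $f$'' form, since that is all the subsequent power--equitability theorem uses, and since for $y$ outside the range of $f$ (possible when $f$ has jumps) one cannot expect $f(g(y))=y$ unless one additionally assumes $f$ continuous. Finally I would note that this strict-monotonicity hypothesis is exactly the condition invoked later as ``provided $\max\reliablestat{\alpha}{}{\cdot}$ is an increasing function.''
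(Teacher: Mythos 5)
Your proof is correct and follows essentially the same route as the paper's: both establish $g(f(x))=x$ by showing $g(f(x))\le x$ from the fact that $f(x)\in\reliablestat{\alpha}{}{x}$, and $g(f(x))\ge x$ by deriving a contradiction with strict monotonicity from any $x'<x$ with $f(x)\in\reliablestat{\alpha}{}{x'}$. Your extra remark that $f\circ g=\mathrm{id}$ can only be asserted on the range of $f$ is a fair refinement of the paper's looser ``inverses of each other'' phrasing, but it does not change the argument.
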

\begin{proof}
Let $y = f(x) = \max \reliablestat{\alpha}{}{x}$. We know that $\min \interpretablestat{\alpha}{}{y} \leq x$, for if it were greater than $x$ then we would have that $x \notin \interpretablestat{\alpha}{}{y}$, which would imply that $y \notin \reliablestat{\alpha}{}{x}$, contradicting the definition of $y$. On the other hand, we cannot have $\min \interpretablestat{\alpha}{}{y} < x$, because this would imply that there is some $x' < x$ such that $y \in \reliablestat{\alpha}{}{x'}$, meaning that $\max \reliablestat{\alpha}{}{x'} \geq y = \max \reliablestat{\alpha}{}{x}$, which contradicts the fact that $f$ is strictly increasing.
\end{proof}

The second lemma gives the connection between reliable intervals and hypothesis testing that we will exploit in our proof.
\begin{lemma}
\label{lem:critical_value}
Fix a statistic $\hvphi$, a property of interest $\Phi$, and some $\alpha, x_0 \in [0,1]$. The most permissive level-$(\alpha/2)$ right-tailed test based on $\hvphi$ of the null hypothesis $H_0 : \Phi(\mcZ) = x_0$ has critical value $\max \reliablestat{\alpha}{}{x_0}$.
\end{lemma}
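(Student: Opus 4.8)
The plan is to unwind both quantities in the statement into facts about the worst-case right-tail probability of $\hvphi(Z)$ over the composite null $H_0 : \Phi(\mcZ) = x_0$, and then observe that they are the same. Write $\gamma(c) = \sup_{\mcZ : \Phi(\mcZ) = x_0} \Pr{\hvphi(Z) > c}$ for the size of the right-tailed test that rejects when $\hvphi(Z) > c$, where $Z$ is a sample of size $n$ from $\mcZ$; this $\gamma$ is non-increasing, and since each survival function $c \mapsto \Pr{\hvphi(Z) > c}$ is right-continuous (as $\hvphi$ takes values in $[0,1]$), so is $\gamma$. By the definition of ``most permissive'' and of a level-$(\alpha/2)$ test, the test in question has critical value $c^\ast := \inf\{\, c : \gamma(c) \le \alpha/2 \,\}$, and right-continuity of $\gamma$ makes this infimum attained.

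Next I would identify $\max \reliablestat{\alpha}{}{x_0}$ with the same $c^\ast$. Recall $\reliablestat{\alpha}{}{x_0}$ is the smallest closed interval $A$ with $\Pr{\hvphi(Z) < \min A} < \alpha/2$ and $\Pr{\hvphi(Z) > \max A} < \alpha/2$ for every $\mcZ$ with $\Phi(\mcZ) = x_0$. Two observations reduce this to a one-sided statement: first, the left condition constrains only $\min A$ and the right condition only $\max A$; second, the family of intervals satisfying both conditions is closed under pairwise intersection \--- two valid intervals must overlap (otherwise, for a common $\mcZ$, the events $\{\hvphi(Z) < \min A\}$ and $\{\hvphi(Z) > \max A'\}$ would cover the whole space yet each have probability below $\alpha/2$, impossible for $\alpha < 1$), and the intersection's endpoints are drawn from the original endpoints, so both conditions persist. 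Hence the family has a least element, and its right endpoint is $b^\ast := \inf\{\, b : \Pr{\hvphi(Z) > b} < \alpha/2 \text{ for all } \mcZ \text{ with } \Phi(\mcZ)=x_0 \,\}$. Since ``$\gamma(b) < \alpha/2$'' implies the per-$\mcZ$ strict bound, which in turn implies ``$\gamma(b) \le \alpha/2$'', the threshold $b^\ast$ is squeezed between $\inf\{\gamma < \alpha/2\}$ and $\inf\{\gamma \le \alpha/2\}$; monotonicity and right-continuity of $\gamma$ collapse these two infima (outside degenerate cases where $\hvphi(Z)$ places an atom exactly at the threshold, which the paper's conventions let us ignore), giving $\max \reliablestat{\alpha}{}{x_0} = b^\ast = c^\ast$, as claimed.

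The step I expect to be the crux is precisely this last reconciliation of the strict inequalities ``$< \alpha/2$'' used in defining the reliable interval with the conventional weak inequality ``$\le \alpha/2$'' used in defining a level-$(\alpha/2)$ test, compounded by the fact that we are taking a supremum over a composite null rather than handling one distribution. The resolution, as indicated, is that both $\max \reliablestat{\alpha}{}{x_0}$ and the most-permissive critical value are characterizations of ``the least threshold that forces the worst-case right-tail probability below $\alpha/2$,'' and the monotonicity and right-continuity of $\gamma$ guarantee that the two precisifications of ``below'' have the same infimum. Everything else \--- monotonicity of the survival functions, non-emptiness and intersection-closedness of the family of valid reliable intervals, and attainment of the relevant infimum \--- is routine.
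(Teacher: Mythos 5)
Your argument is correct and follows essentially the same route as the paper's (much terser) proof: both identify the most permissive critical value and $\max \reliablestat{\alpha}{}{x_0}$ with the worst-case $(1-\alpha/2)$-quantile of $\hvphi$ over the composite null, i.e., the least threshold driving $\sup_{\mcZ:\Phi(\mcZ)=x_0}\Pr{\hvphi(Z)>c}$ below $\alpha/2$. The paper asserts this identification ``by definition'' in two sentences, whereas you supply the supporting details it omits \--- right-continuity and monotonicity of the worst-case tail function, the intersection-closedness that makes the least valid reliable interval well defined, and the reconciliation of the strict inequality in the reliable-interval definition with the weak inequality in the definition of a level-$(\alpha/2)$ test \--- all of which is sound modulo the same degenerate boundary cases the paper itself ignores.
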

\begin{proof}
We seek the smallest critical value that yields a level-$(\alpha/2)$ test. This would be the supremum, over all $\mcZ$ with $\Phi(\mcZ) = x_0$, of the $(1-\alpha/2)\cdot 100\%$ value of the sampling distribution of $\hvphi$ when applied to $\mcZ$. By definition this is $\max \reliablestat{\alpha}{}{x_0}$.
\end{proof}

\subsection{Proving the main result: equitability in terms of statistical power}
We are now ready to prove our main result, which is the following equivalent characterization of equitability in terms of statistical power.
\begin{thm}
\label{thm:equivalence}
Fix a set $\Q \subset \P$, a function $\Phi : \Q \rightarrow [0,1]$, and $0 < \alpha < 1/2$. Let $\hvphi$ be a statistic with the property that $\max \reliablestat{2\alpha}{}{x}$ is a strictly increasing function of $x$. Then for all $d > 0$, the following are equivalent.
\begin{enumerate}	
\item $\hvphi$ is worst-case $1/d$-interpretable with respect to $\Phi$ with confidence $1-2\alpha$.
\item For every $x_0, x_1 \in [0,1]$ satisfying $x_1 - x_0 > d$, there exists a level-$\alpha$ right-tailed test based on $\hvphi$ that can distinguish between $H_0 : \Phi(\mcZ) \leq x_0$ and $H_1 : \Phi(\mcZ) \geq x_1$ with power at least $1-\alpha$.
\end{enumerate}
\end{thm}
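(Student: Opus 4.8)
The plan is to prove the two implications separately, using Lemma~\ref{lem:critical_value} (applied with $2\alpha$ in place of $\alpha$) as the bridge between reliable intervals and critical values of right-tailed tests, and Lemma~\ref{lem:reliable_interpretable_inverse} (also with parameter $2\alpha$) as the bridge between reliable and interpretable intervals. Throughout, write $f(x) = \max\reliablestat{2\alpha}{}{x}$, which is strictly increasing by hypothesis, so $f$ and $g(y) := \min\interpretablestat{2\alpha}{}{y}$ are mutual inverses.

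\textbf{(1) $\Rightarrow$ (2).} Given $x_0 < x_1$ with $x_1 - x_0 > d$, I would exhibit the single right-tailed test that rejects when $\hvphi(Z) > t$, for $t := f(x_0) = \max\reliablestat{2\alpha}{}{x_0}$. For the level: for any $\mcZ$ with $\Phi(\mcZ) = x \le x_0$ we have $\max\reliablestat{2\alpha}{}{x} = f(x) \le t$, so $\Pr{\hvphi(Z) > t} \le \Pr{\hvphi(Z) > \max\reliablestat{2\alpha}{}{x}} < \alpha$ by the definition of reliability; hence the test has level $\alpha$ against the composite null $H_0 : \Phi(\mcZ) \le x_0$. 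For the power: by Lemma~\ref{lem:reliable_interpretable_inverse}, $\min\interpretablestat{2\alpha}{}{t} = g(t) = x_0$, so worst-case $1/d$-interpretability gives $\max\interpretablestat{2\alpha}{}{t} \le x_0 + d < x_1$. Thus for any $\mcZ$ with $\Phi(\mcZ) = x_1' \ge x_1$ we have $x_1' \notin \interpretablestat{2\alpha}{}{t}$, hence $t \notin \reliablestat{2\alpha}{}{x_1'}$; since $f(x_1') > f(x_0) = t$ rules out $t$ lying above $\reliablestat{2\alpha}{}{x_1'}$, it follows that $t < \min\reliablestat{2\alpha}{}{x_1'}$, so $\Pr{\hvphi(Z) > t} \ge 1 - \Pr{\hvphi(Z) < \min\reliablestat{2\alpha}{}{x_1'}} > 1 - \alpha$. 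So this test distinguishes $H_0 : \Phi \le x_0$ from $H_1 : \Phi \ge x_1$ with power at least $1-\alpha$.

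\textbf{(2) $\Rightarrow$ (1).} I would argue the contrapositive. If $\hvphi$ is not worst-case $1/d$-interpretable with confidence $1-2\alpha$, there is a $y$ for which $\interpretablestat{2\alpha}{}{y}$ has diameter greater than $d$; set $x_0 = \min\interpretablestat{2\alpha}{}{y}$, so that $y = f(x_0)$ by Lemma~\ref{lem:reliable_interpretable_inverse}, and choose $x_1 \in (x_0 + d,\ \max\interpretablestat{2\alpha}{}{y})$, a nonempty interval with $x_1 - x_0 > d$. I claim no level-$\alpha$ right-tailed test based on $\hvphi$ distinguishes $H_0 : \Phi \le x_0$ from $H_1 : \Phi \ge x_1$ with power at least $1-\alpha$. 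Any such test, being in particular a level-$\alpha$ test of $H_0 : \Phi = x_0$, has critical value $c \ge \max\reliablestat{2\alpha}{}{x_0} = y$ by Lemma~\ref{lem:critical_value}. On the other hand, since $x_1 < \max\interpretablestat{2\alpha}{}{y}$ is the supremum of $\{x : y \in \reliablestat{2\alpha}{}{x}\}$, there is some $x' > x_1$ with $y \in \reliablestat{2\alpha}{}{x'}$, so $\min\reliablestat{2\alpha}{}{x'} \le y \le c < \max\reliablestat{2\alpha}{}{x'}$ (the last inequality because $f(x') > f(x_0) = y$). I would then argue that the test has power less than $1-\alpha$ against $\Phi(\mcZ) = x'$: if $c \ge \max\reliablestat{2\alpha}{}{x'}$ the upper-tail bound of the reliability definition already forces power below $\alpha < 1-\alpha$, while if $c$ lies in the interior $[\min\reliablestat{2\alpha}{}{x'}, \max\reliablestat{2\alpha}{}{x'})$, minimality of $\reliablestat{2\alpha}{}{x'}$ produces a $\mcZ$ with $\Phi(\mcZ) = x' \ge x_1$ putting too much sampling mass at or below $c$. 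Either way (2) fails.

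\textbf{Main obstacle.} The forward direction is essentially bookkeeping with the two lemmas; the delicate point is the final step of the converse, where I must convert the minimality of $\reliablestat{2\alpha}{}{x'}$ into a genuine failure of power even though the reliability definition uses strict inequalities ($< \alpha$) whereas ``level $\alpha$'' and ``power at least $1-\alpha$'' are stated non-strictly. I expect this to require a short limiting argument: since the reliable interval cannot be shrunk from below, for every threshold slightly above $\min\reliablestat{2\alpha}{}{x'}$ some distribution with $\Phi$-value $x'$ puts at least $\alpha$ of its sampling mass below that threshold, and one passes to the limit (or exploits an atom at $c$, if present) to conclude that some such distribution has power strictly below $1-\alpha$ against the test in question. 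A secondary routine check is that the degenerate cases --- $\interpretablestat{2\alpha}{}{y}$ empty or a singleton, or $y$ outside the range of $f$ --- are vacuous for the diameter bound.
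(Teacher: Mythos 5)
Your argument is correct at the paper's own level of rigor and uses the same two lemmas, but it packages the proof differently: the paper never proves the two implications directly, instead establishing Proposition~\ref{prop:equitabilityAndPower} --- that $\interpretablestat{\alpha}{}{y}$ coincides with the closure of the uncertain set of the power function $\powerfunc{\alpha/2}{x_0}$ at $x_0 = \min\interpretablestat{\alpha}{}{y}$ --- and asserting that the theorem ``can be seen to follow.'' Your two directions are exactly the two halves of that identity: your forward direction reproduces the paper's proof that $\sup U \le \max \interpretablestat{\alpha}{}{y}$ (monotonicity of $f$ forces $y$ below $\min\reliablestat{2\alpha}{}{x}$ for $x$ past the interpretable interval, hence power exceeding $1-\alpha$), and your contrapositive reproduces the proof that $\sup U \ge \max \interpretablestat{\alpha}{}{y}$ (minimality of the reliable interval yields a distribution with mass at least $\alpha$ below the critical value). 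What your route adds is precisely the bookkeeping the paper leaves implicit in passing from the proposition to the theorem: the level check over the composite null $\Phi(\mcZ)\le x_0$ via monotonicity of $f$, the uniform power bound over the composite alternative $\Phi(\mcZ)\ge x_1$, and --- essential in the converse, since statement 2 is existential over tests --- the use of Lemma~\ref{lem:critical_value} to show that \emph{every} level-$\alpha$ right-tailed test of $H_0:\Phi=x_0$ has critical value at least $y$, so that defeating all of them at once suffices. Two small caveats, neither of which puts you below the published argument: your parenthetical justification of $c<\max\reliablestat{2\alpha}{}{x'}$ is invalid (strict monotonicity bounds $\max\reliablestat{2\alpha}{}{x'}$ from below; it says nothing about $c$ from above), though your ensuing case split covers $c\ge\max\reliablestat{2\alpha}{}{x'}$ anyway; and the strict-versus-weak inequality issue you flag is real --- minimality only gives $\Pr{\hvphi(Z)\ge c}\le 1-\alpha$ rather than a strict inequality --- but the paper's proof of the corresponding step (deducing $\Pr{\hvphi(Z)\ge y} < 1-\alpha/2$ from $\Pr{\hvphi(Z)<y}\ge \alpha/2$) contains the identical off-by-epsilon slip, as does its silent assumption that $y$ lies in the range of $\max\reliablestat{\alpha}{}{\cdot}$, which you at least flag explicitly.
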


Theorem~\ref{thm:equivalence} can be seen to follow from the proposition below.

\begin{prop} \label{prop:equitabilityAndPower}
Fix $0 < \alpha < 1$ and $d > 0$, and suppose $\hvphi$ is a statistic with the property that $\max \reliablestat{\alpha}{}{x}$ is a strictly increasing function of $x$. Then for $y \in [0,1]$, the interval $\interpretablestat{\alpha}{}{y}$ equals the closure of the uncertain set of $\powerfunc{\alpha/2}{x_0}$ for $x_0 = \min \interpretablestat{\alpha}{}{y}$. Equivalently, for $x_0 \in [0,1]$, the closure of the uncertain set of $\powerfunc{\alpha/2}{x_0}$ equals $\interpretablestat{\alpha}{}{y}$ for $y = \max \reliablestat{\alpha}{}{x_0}$.
\end{prop}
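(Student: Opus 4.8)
The plan is to recognize that the two displayed statements are the same assertion read off the two sides of the inversion in Lemma~\ref{lem:reliable_interpretable_inverse}, and then to prove that single assertion by unpacking the definitions of the test $T^{x_0}_{\alpha/2}$ and of the uncertain set, rephrasing the power condition as a statement about the reliable interval at $x_1$. First I would set $f(x)=\max\reliablestat{\alpha}{}{x}$ and $g(y)=\min\interpretablestat{\alpha}{}{y}$; since $f$ is strictly increasing, Lemma~\ref{lem:reliable_interpretable_inverse} gives that $f$ and $g$ are mutual inverses, so the pairings ``$x_0=\min\interpretablestat{\alpha}{}{y}$'' and ``$y=\max\reliablestat{\alpha}{}{x_0}$'' are the same pairing. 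It therefore suffices to fix $x_0$, put $y:=\max\reliablestat{\alpha}{}{x_0}$ (so $\min\interpretablestat{\alpha}{}{y}=x_0$), and show that the closure of the uncertain set of $\powerfunc{\alpha/2}{x_0}$ is $\interpretablestat{\alpha}{}{y}$.

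Next I would identify the test. By Lemma~\ref{lem:critical_value}, $T^{x_0}_{\alpha/2}$ rejects exactly when $\hvphi(Z)>\max\reliablestat{\alpha}{}{x_0}=y$, so for every $x_1$,
\[
\powerfunc{\alpha/2}{x_0}(x_1)=\inf_{\mcZ:\Phi(\mcZ)=x_1}\Pr{\hvphi(Z)>y}=1-\sup_{\mcZ:\Phi(\mcZ)=x_1}\Pr{\hvphi(Z)\le y}.
\]
Consequently $x_1$ lies in the uncertain set of $\powerfunc{\alpha/2}{x_0}$ if and only if $x_1\ge x_0$ and $\sup_{\mcZ:\Phi(\mcZ)=x_1}\Pr{\hvphi(Z)\le y}>\alpha/2$.

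Then I would translate this into the language of reliable intervals. For $x_1\ge x_0$ the upper endpoint is free: $f$ increasing gives $\max\reliablestat{\alpha}{}{x_1}\ge\max\reliablestat{\alpha}{}{x_0}=y$. For the lower endpoint, if $y<\min\reliablestat{\alpha}{}{x_1}$ then $\{\hvphi(Z)\le y\}\subseteq\{\hvphi(Z)<\min\reliablestat{\alpha}{}{x_1}\}$, so the defining property of $\reliablestat{\alpha}{}{x_1}$ forces $\Pr{\hvphi(Z)\le y}<\alpha/2$ for every admissible $\mcZ$, i.e.\ $x_1$ is not in the uncertain set; and if $y\ge\min\reliablestat{\alpha}{}{x_1}$ then shrinking $\reliablestat{\alpha}{}{x_1}$ slightly from its lower endpoint must break the reliability inequality (the upper-tail half being unaffected), which exhibits an admissible $\mcZ$ with $\Pr{\hvphi(Z)\le y}>\alpha/2$, so $x_1$ is in the uncertain set. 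Hence the uncertain set equals $\{x_1\ge x_0:y\in\reliablestat{\alpha}{}{x_1}\}$; and since strict monotonicity of $f$ gives $\max\reliablestat{\alpha}{}{x}<y$, hence $y\notin\reliablestat{\alpha}{}{x}$, for every $x<x_0$, the restriction $x_1\ge x_0$ is vacuous and the uncertain set is exactly $\{x:y\in\reliablestat{\alpha}{}{x}\}$ --- whose smallest enclosing closed interval is, by definition, $\interpretablestat{\alpha}{}{y}$. Because membership in this set is the conjunction of the up-set condition $y\le\max\reliablestat{\alpha}{}{x}$ and, once one knows $\min\reliablestat{\alpha}{}{\cdot}$ is non-decreasing, the down-set condition $y\ge\min\reliablestat{\alpha}{}{x}$, the set is an interval, so its closure equals its smallest enclosing closed interval, giving the claim; the restatement then follows immediately from the bijection of the first step.

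I expect the genuine work to be in two places. The first is the ``conversely'' direction above: both the reliability conditions ($<\alpha/2$) and the uncertain-set condition ($<1-\alpha/2$) are strict, so deducing $\sup_{\mcZ}\Pr{\hvphi(Z)\le y}>\alpha/2$ from the minimality of $\reliablestat{\alpha}{}{x_1}$ requires excluding a borderline configuration in which that supremum equals $\alpha/2$ without being attained; this is handled by mild regularity of the sampling distributions of $\hvphi$, or simply by passing to closures --- which is all Theorem~\ref{thm:equivalence} ultimately needs, since it invokes only the diameters of these intervals. The second is the assertion that $\{x:y\in\reliablestat{\alpha}{}{x}\}$ has interval closure, so that ``closure'' and ``smallest closed interval containing it'' coincide; this needs $\min\reliablestat{\alpha}{}{\cdot}$ to be monotone, which is natural in the regimes of interest and sits comfortably alongside the monotonicity of $\max\reliablestat{\alpha}{}{\cdot}$ already assumed, but should be made explicit rather than left implicit.
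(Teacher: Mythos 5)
Your proposal is correct and follows essentially the same route as the paper's proof: reduce the two formulations to one via Lemma~\ref{lem:reliable_interpretable_inverse}, identify the critical value of the most permissive test as $y = \max \reliablestat{\alpha}{}{x_0}$ via Lemma~\ref{lem:critical_value}, and then convert the power condition at $x_1$ into the statement that $y$ lies in (or below) $\reliablestat{\alpha}{}{x_1}$, using the assumed monotonicity of $\max \reliablestat{\alpha}{}{\cdot}$ to exclude the upper-tail alternative. The two subtleties you flag at the end --- the strict-versus-non-strict inequalities at the boundary, and the fact that identifying the closure of the uncertain set with the smallest closed interval containing it implicitly needs $\min \reliablestat{\alpha}{}{\cdot}$ to be monotone --- are genuine, and the paper's own proof passes over both of them by computing only the infimum and supremum of the uncertain set.
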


An illustration of this proposition and its proof is shown in Figure~\ref{fig:equitabilityAndPower}.
\begin{figure}[h!]
	\centering
    \includegraphics[clip=true, trim = 0in 0in 0in 0in, height=0.5\textheight]{\pathToCommonFigs/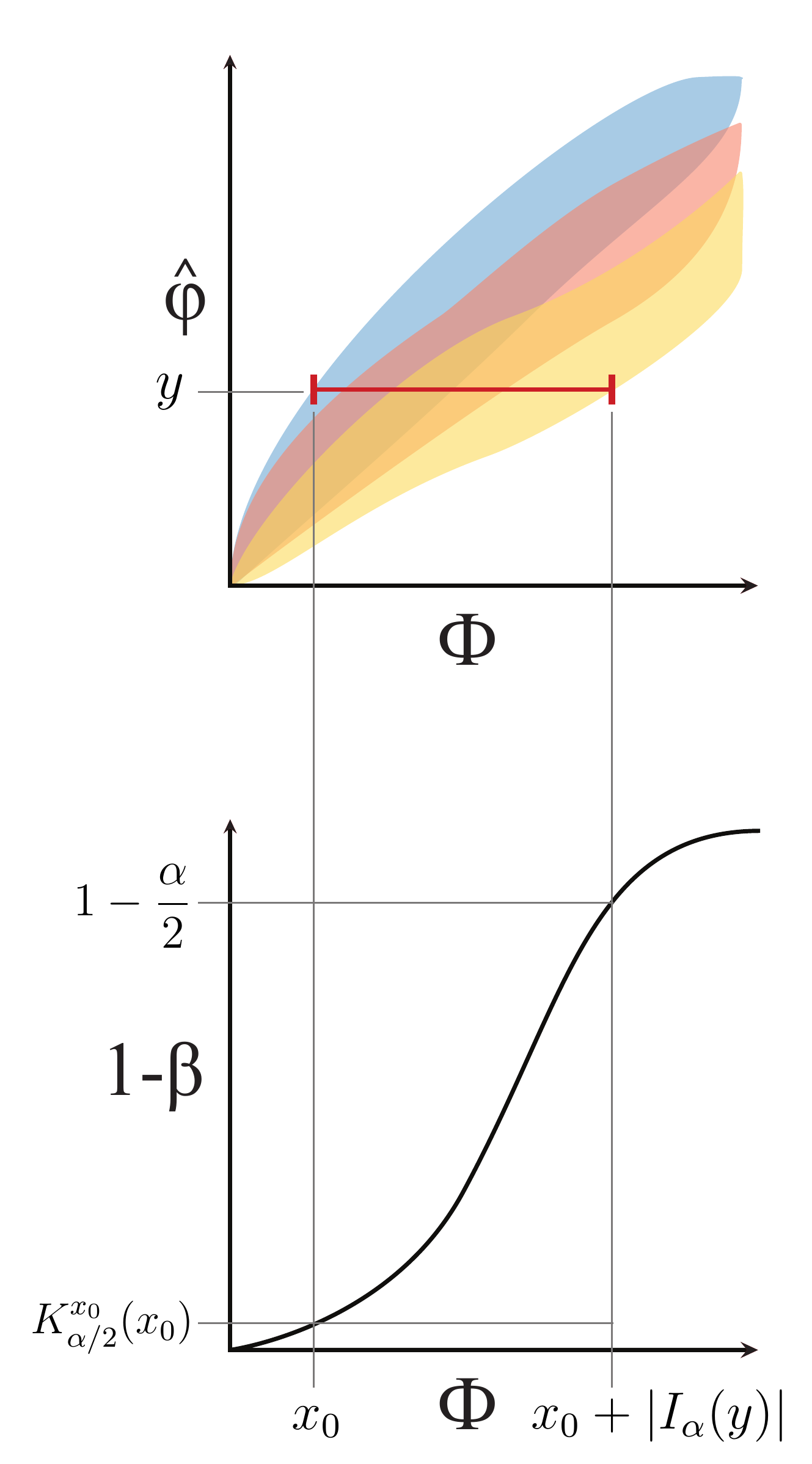}
  \caption{The relationship between equitability and power, as in Proposition~\ref{prop:equitabilityAndPower}.
  The top plot is the same as the one in Figure~\ref{fig:reliabilityInterpretability}a, with the indicated interval denoting the interpretable interval $\interpretablestat{\alpha}{}{y}$. The bottom plot is a plot of the power function $\powerfunc{\alpha/2}{x_0}(x)$, with the y-axis indicating statistical power. The key to the proof of the proposition is to notice that the width of the interpretable interval describes the distance from $x_0$ to the point at which the power function reaches $1-\alpha/2$, and this is exactly the width of the uncertain set of the power function. (Notice that because the null and alternative hypotheses are composite, $\powerfunc{\alpha/2}{x_0}(x_0)$ need not equal $\alpha/2$; in general it may be lower.)}\label{fig:equitabilityAndPower}
\end{figure}

\begin{proof}
The equivalence of the two statements follows from Lemma~\ref{lem:reliable_interpretable_inverse}, which states that $y = \max \reliablestat{\alpha}{}{x_0}$ if and only if $x_0 = \min \interpretablestat{\alpha}{}{y}$. We therefore prove only the first statement, namely that $\interpretablestat{\alpha}{}{y}$ is the uncertain set of $\powerfunc{\alpha/2}{x_0}$ for $x_0 = \min \interpretablestat{\alpha}{}{y}$.

Let $U$ be the uncertain set of $\powerfunc{\alpha/2}{x_0}$. We prove the claim by showing first that $\inf U = \min \interpretablestat{\alpha}{}{y}$, and then that $\sup U = \max \interpretablestat{\alpha}{}{y}$.

To see that $\inf U = \min \interpretablestat{\alpha}{}{y}$, we simply observe that because $\alpha/2 < 1/2$, we have $\powerfunc{\alpha/2}{x_0}(x_0) \leq \alpha/2 < 1-\alpha/2$, which means that $U$ is non-empty, and so by construction its infimum is $x_0$, which we have assumed equals $\min \interpretablestat{\alpha}{}{y}$.

Let us now show that $\sup U \geq \max \interpretablestat{\alpha}{}{y}$: by the definition of the interpretable interval, we can find $x$ arbitrarily close to $\max \interpretablestat{\alpha}{}{y}$ from below such that $y \in \reliablestat{\alpha}{}{x}$. But this means that there exists some $\mcZ$ with $\Phi(\mcZ) = x$ such that if $Z$ is a sample of size $n$ from $\mcZ$ then
\[ \Pr{\hvphi(Z) < y} \geq \frac{\alpha}{2} \]
i.e.,
\[ \Pr{\hvphi(Z) \geq y} < 1- \frac{\alpha}{2} .\]
But since as we already noted $y = \max \reliablestat{\alpha}{}{x_0}$, Lemma~\ref{lem:critical_value} tells us that it is the critical value of the most permissive level-$(\alpha/2)$ right-tailed test of $H_0 : \Phi(\mcZ) = x_0$. Therefore, $\powerfunc{\alpha/2}{x_0}(x) < 1-\alpha/2$, meaning that $x \in U$.

It remains only to show that $\sup U \leq \max \interpretablestat{\alpha}{}{y}$. To do so, we note that $y \notin \reliablestat{\alpha}{}{x}$ for all $x > \max \interpretablestat{\alpha}{}{y}$. This implies that either $y > \max \reliablestat{\alpha}{}{x}$ or $y < \min \reliablestat{\alpha}{}{x}$. However, since $y \in \reliablestat{\alpha}{}{x_0}$ and $\max \reliablestat{\alpha}{}{\cdot}$ is an increasing function, no $x > x_0$ can have $y > \max \reliablestat{\alpha}{}{x}$. Thus the only option remaining is that $y < \min \reliablestat{\alpha}{}{x}$. This means that if $Z$ is a sample of size $n$ from any $\mcZ$ with $\Phi(\mcZ) = x > \max \interpretablestat{\alpha}{}{y}$, then
\[ \Pr{\hvphi(Z) < y} < \frac{\alpha}{2} \]
i.e.,
\[ \Pr{\hvphi(Z) \geq y} \geq 1- \frac{\alpha}{2} .\]
As above, this implies that $\powerfunc{\alpha/2}{x_0}(x) \geq 1-\alpha/2$, which means that $x \notin U$, as desired.
\end{proof}

\subsection{Quantifying equitability via statistical power}
\label{sec:equitabilityViaPower}
Theorem~\ref{thm:equivalence} gives us an alternative to measuring equitability via lengths of interpretable intervals. Instead, for every $x_0 \in [0,1)$ and for every $x_1 > x_0$, we can use many samples of size $n$ to estimate the power of right-tailed tests based on $\hvphi$ at distinguishing $H_0: \Phi = x_0$ from $H_1 :\Phi = x_1$. This process is illustrated schematically in Figure~\ref{fig:equitability_power_schematic}. In that figure, good equitability corresponds to high power on pairs $(x_1, x_0)$ even when $x_1 - x_0$ is small.

\begin{figure}[h!]
	\centering
    \includegraphics[clip=true, trim = 0in 1.1in 0in 3.18in, width=0.8\textwidth]{\pathToCommonFigs/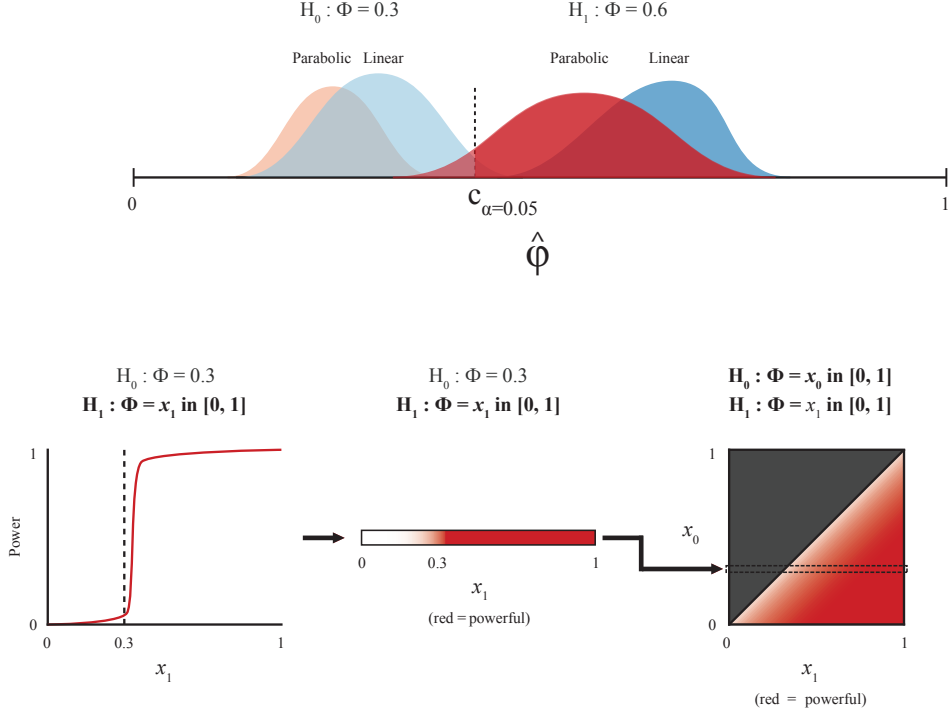}
  \caption{A schematic illustration of the visualization of equitability via statistical power.
  \figpart{Top} A depiction of the sampling distributions of a test statistic $\hvphi$ when a data set contains only four relationships: a parabolic and a linear relationship with $\Phi = 0.3$, and a parabolic and a linear relationship with $\Phi = 0.6$. The dashed line represents the critical value of the most permissive level-$\alpha$ right-tailed test of $H_0 : \Phi = 0.3$.
  \figpart{Bottom left} The power function of the most permissive level-$\alpha$ right-tailed test based on a statistic $\hvphi$ of the null hypothesis $H_0: \Phi = 0.3$. The curve shows the power of the test as a function of $x_1$, the value of $\Phi$ that defines the alternative hypothesis.
  \figpart{Bottom middle} The power function can be depicted instead as a heat map. 
  \figpart{Bottom right} Instead of considering just one null hypothesis, we can consider a set of null hypotheses (with corresponding critical values) of the form $H_0: \Phi = x_0$ and plot each of the resulting power curves as a heat map. The result is a plot in which the intensity of the color in the coordinate $(x_1, x_0)$ corresponds to the power of the size-$\alpha$ right-tailed test based on $\hvphi$ at distinguishing $H_1 : \Phi = x_1$ from $H_0 : \Phi = x_0$. A statistic is $1/d$-equitable with confidence $1-2\alpha$ if this power surface attains the value $1-\alpha$ within distance $d$ of the diagonal along each row. In other words, the redder the triangle appears, the higher the equitability of $\hvphi$.
  }
  \label{fig:equitability_power_schematic}
\end{figure}

\subsection{Discussion}
In this section, we gave a characterization of equitability in terms of statistical power with respect to a family of null hypotheses corresponding to different relationship strengths. (See Theorem~\ref{thm:equivalence}.) This characterization shows what the concept of equitability/interpretability is fundamentally about: being able to distinguish not just signal ($\Phi > 0$) from no signal ($\Phi = 0$) but also stronger signal ($\Phi = x_1$) from weaker signal ($\Phi = x_0$), and being able to do so across relationships of different types. This indeed makes sense when a data set contains an overwhelming number of heterogeneous relationships that exhibit, say, $\Phi(\mcZ) = 0.3$ and that we would like to ignore because they are not as interesting as the small number of relationships with, say, $\Phi(\mcZ) = 0.8$.

Let us now explore how the power requirement into which equitability translates differs from the conventional lens through which measures of dependence are analyzed. We do so by returning once more to the case in which $\Q$ is a set of noisy functional relationships and the property of interest is $R^2$. In this setting, the conventional way to assess a measure of dependence would be through analysis of its power with respect to a null hypothesis of independence and with a simple alternative hypothesis. Such an analysis would consider, say, right-tailed tests based on the statistic $\hvphi$ and evaluate their power at rejecting the null hypothesis of $R^2 = 0$, i.e. statistical independence, first on linear relationships with varying noise levels, then separately on exponential relationships with varying noise levels, and so on.

In contrast, our result shows that for $\hvphi$ to be $1/d$-equitable, it must yield right-tailed tests with high power at distinguishing null hypotheses of the form $R^2 \leq x_0$ from alternative hypotheses of the form $R^2 \geq x_1$ for {\em any} $x_1 > x_0 + d$. This is more stringent than the conventional analysis described above for the following three reasons.
\begin{enumerate}
\item Instead of just one null hypothesis $x_0$ (i.e., $x_0 = 0$), there are many possible values of $x_0$ corresponding to different $R^2$ values.
\item Each of the new null hypotheses can be composite since $\Q$ can contain relationships of many different types (e.g. noisy linear, noisy sinusoidal, and noisy parabolic). Whereas for many measures of dependence all of these relationships may have reduced to a single null hypothesis of statistical independence in the case of $R^2 = 0$, they yield composite null hypotheses once we allow $R^2$ to be non-zero.
\item The alternative hypotheses here are also composite, since each one similarly consists of several different relationship types with the same $R^2$. Whereas conventional analysis of power against independence considers only one alternative at a time, here we require that tests simultaneously have good power on sets of alternatives with the same $R^2$.
\end{enumerate}

This understanding of equitability is both good news and bad news. On the one hand, it provides us with a concrete sense of the relationship of equitability to power against independence, which has been the more traditional way of evaluating measures of dependence. In so doing, it also makes clear the motivation behind equitability and the cases in which it is useful. On the other hand, however, the understanding that equitability corresponds to power against a much larger set of null hypotheses suggests, via ``no free lunch''-type considerations, that if we want to achieve higher power against this larger set of null hypotheses, we may need to give up some power against independence. And indeed, in \cite{reshef2015comparisons} we demonstrate empirically that such a trade-off does seem to exist for several measures of dependence.

%
However, there are situations in which it may be desirable to give up some power against independence in exchange for a degree of equitability. For instance, recall the analysis \cite{heller2014consistent} of the gene expression data set discussed earlier in this paper. In that analysis, not only did several measures of dependence each detect thousands of significant relationships after correction for multiple hypothesis testing, but there was also an overlap of over $85\%$ among the relationships detected by the five best-performing methods. In data exploration scenarios such as this one, in which existing measures of dependence reliably identify so many relationships, focusing on additional gains in power against independence appears less of a significant priority than deciding how to choose among the large number of relationships already detected.

\section{Equitability implies low detection threshold}
The primary motivation given for equitability is that often data sets contain so many relationships that we are not interested in all deviations from independence but rather only in the strongest few relationships. However, there are also many data sets in which, due to low sample size, multiple-testing considerations, or relative lack of structure in the data, very few relationships pass significance. Alternatively, there are also settings in which equitability is too ambitious even at large sample sizes. In such settings, we may indeed be interested in simply detecting deviations from independence rather than ranking them by strength.

In this situation, there is still cause for concern about the effect on our results of our choice of test statistic $\hvphi$. For instance, it is easy to imagine that, despite asymptotic guarantees, an independence test will suffer from low power even on strong relationships of a certain type at a finite sample size $n$ because the test statistic systematically assigns lower scores to relationships of that type. To avoid this, we might want a guarantee that, at a sample size of $n$, the test has a given amount of power in detecting relationships whose strength as measured by $\Phi$ is above a certain threshold, across a broad range of relationship types. This would ensure that, even if we cannot rank relationships by strength, we at least will not miss important relationships as a result of the statistic we use.

In this section we show a straightforward connection between equitability as defined above and this desideratum, which we call {\em low detection threshold}. In particular, we show via the alternate characterization of equitability proven in the previous section that low detection threshold is a straightforward consequence of high equitability. Since the converse does not hold, low detection threshold may be a reasonable criterion to use in situations in which equitability is too much to ask.

Given a set $\Q$ of standard relationships, and a property of interest $\Phi$, we define low detection threshold as follows.
\begin{definition}
A statistic $\hvphi$ has a {\em $(1-\beta)$-detection threshold} of $d$ at level $\alpha$ with respect to $\Phi$ on $\Q$ if there exists a level-$\alpha$ right-tailed test based on $\hvphi$ of the null hypothesis $H_0 : \Phi(\mcZ) = 0$ whose power on $H_1 : \mcZ$ at a sample size of $n$ is at least $1-\beta$ for all $\mcZ \in \Q$ with $\Phi(\mcZ) > d$.
\end{definition}

The connection between equitability and low detection threshold is then a straightforward corollary of Theorem~\ref{thm:equivalence}.
\begin{cor}
Fix some $0 < \alpha < 1$, let $\hvphi$ be worst-case $1/d$-interpretable with respect to $\Phi$ on $\Q$ with confidence $1-2\alpha$, and assume that $\max \reliablestat{2\alpha}{}{\cdot}$ is a strictly increasing function. Then $\hvphi$ has a $(1-\alpha)$-detection threshold of $d$ at level $\alpha$ with respect to $\Phi$ on $\Q$.
\end{cor}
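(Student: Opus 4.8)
The plan is to specialize the equivalence already established---most conveniently in the form of Proposition~\ref{prop:equitabilityAndPower}---to the null value $x_0 = 0$ and then unpack what it says in the vocabulary of the detection-threshold definition. The witnessing test is $T := T^0_\alpha$, the most permissive level-$\alpha$ right-tailed test based on $\hvphi$ of $H_0 : \Phi(\mcZ) = 0$; by Lemma~\ref{lem:critical_value} (applied with ``$\alpha$'' replaced by $2\alpha$) its critical value is $t := \max \reliablestat{2\alpha}{}{0}$. Since $T$ is level $\alpha$ by construction and is a right-tailed test of $H_0 : \Phi(\mcZ) = 0$, everything reduces to showing that its power is at least $1-\alpha$ on every $\mcZ \in \Q$ with $\Phi(\mcZ) > d$.

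To get that bound I would invoke Proposition~\ref{prop:equitabilityAndPower} with its parameter taken to be $2\alpha$ (legitimate since we may assume $2\alpha < 1$, as otherwise the confidence level $1-2\alpha$ is vacuous) and with $x_0 = 0$. It yields that the closure of the uncertain set of the power function $\powerfunc{\alpha}{0}$ of $T$ equals $\interpretablestat{2\alpha}{}{t}$ for $t = \max \reliablestat{2\alpha}{}{0}$. Two facts then pin down this interval: Lemma~\ref{lem:reliable_interpretable_inverse} (using that $\max \reliablestat{2\alpha}{}{\cdot}$ is strictly increasing) gives $\min \interpretablestat{2\alpha}{}{t} = 0$, and worst-case $1/d$-interpretability with confidence $1-2\alpha$ gives $\mathrm{diam}\,\interpretablestat{2\alpha}{}{t} \leq d$. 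Hence $\interpretablestat{2\alpha}{}{t} \subseteq [0,d]$, so the uncertain set of $\powerfunc{\alpha}{0}$ is contained in $[0,d]$, which by the definition of the uncertain set means $\powerfunc{\alpha}{0}(x_1) \geq 1-\alpha$ for every $x_1 \in (d,1]$.

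The last step is a translation. By the definition of the power function, $\powerfunc{\alpha}{0}(x_1) = \inf_{\mcZ : \Phi(\mcZ) = x_1} \Pr{T(Z)\mbox{ rejects}}$, so for any $\mcZ \in \Q$ with $\Phi(\mcZ) > d$ we conclude $\Pr{T(Z)\mbox{ rejects}} \geq \powerfunc{\alpha}{0}(\Phi(\mcZ)) \geq 1-\alpha$. Together with the fact that $T$ is a level-$\alpha$ right-tailed test of $H_0 : \Phi(\mcZ) = 0$, this is precisely the assertion that $\hvphi$ has a $(1-\alpha)$-detection threshold of $d$ at level $\alpha$ with respect to $\Phi$ on $\Q$. (Alternatively, one can read the corollary directly off statement~2 of Theorem~\ref{thm:equivalence} with $x_0 = 0$ and arbitrary $x_1 > d$, after observing that the most permissive level-$\alpha$ right-tailed test of $H_0 : \Phi(\mcZ)=0$ has power at least that of any other level-$\alpha$ right-tailed test and hence serves as a single witness simultaneously for all such $x_1$.)

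There is no substantive obstacle here---the corollary is essentially a special case of the main theorem---so the only thing requiring care is the bookkeeping: matching the confidence level $1-2\alpha$ to the ``$\alpha$'' of Proposition~\ref{prop:equitabilityAndPower}, tracking the factor-of-two shift between the level of $T$ and the index of the relevant reliable interval via Lemma~\ref{lem:critical_value}, and noting that $\interpretablestat{2\alpha}{}{t}$ has left endpoint exactly $0$, so that the diameter bound actually confines it to $[0,d]$---the region relevant to detecting relationships with $\Phi > d$---rather than to some interval elsewhere in $[0,1]$.
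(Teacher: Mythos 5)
Your proof is correct and follows essentially the same route as the paper, which presents this result as a direct specialization of Theorem~\ref{thm:equivalence} (equivalently, Proposition~\ref{prop:equitabilityAndPower}) to $x_0 = 0$; your careful bookkeeping of the $\alpha$ versus $2\alpha$ indices, the observation that $\min \interpretablestat{2\alpha}{}{t} = 0$ pins the interval to $[0,d]$, and the remark that the single most permissive test witnesses all alternatives $x_1 > d$ simultaneously are exactly the details the paper leaves implicit.
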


Assume that $\Phi$ has the property that it is zero precisely in cases of statistical independence. Then the above corollary says that equitability and interpretability --- to the extent they can be achieved --- make strong guarantees about power against independence on $\Q$. On the other hand, it is easy to see that low detection threshold need not imply equitability. Therefore, minimal power against independence is a strictly weaker criterion than equitability.

The connection between equitability and detection threshold with respect to $\Phi$ is important because there exist situations in which equitability may be difficult to achieve but in which we still want some sort of guarantee about the robustness of our power against independence to changes in relationship type. This general theme of not missing relationships because of their type is the intuitive heart of equitability, and the above corollary shows how this conception might be utilized in other ways.

Another way that low detection threshold arises naturally is if we pre-filter our data set using some independence test before conducting a more fine-grained analysis with a second statistic. In that case, low detection threshold ensures that we will not ``throw out'' important relationships prematurely just because of their relationship type. In our companion paper \cite{reshef2015comparisons}, we propose precisely such a scheme, and we analyze the detection threshold of the preliminary test in question to argue that the scheme will perform well.

\section{Quantifying equitability in practice}
\label{sec:equitabilityAnalysis}
Having defined equitability and seen how it can be interpreted in terms of power, we now consider the equitability on a set of noisy functional relationships of some commonly used methods: the maximal information coefficient as estimated by $\MICestE$ \cite{reshef2015estimating}, distance correlation \cite{szekely2007measuring, szekely2009brownian, huo2014fast}, and mutual information \cite{Cover2006, csiszar2008axiomatic} as estimated using the Kraskov estimator \cite{Kraskov}.

In this analysis, we use $\Phi = R^2$ as our property of interest, $n=500$ as our sample size, and
\[
\Q = \{ (x + \ep_\sigma, f(x) + \ep'_\sigma) : x \in X_f, \ep_\sigma, \ep'_\sigma \sim \mathcal{N}(0, \sigma^2), f \in F, \sigma \in \R_{\geq 0} \}
\]
where $\ep_\sigma$ and $\ep'_\sigma$ are i.i.d., $F$ is the set of functions in Appendix~\ref{app:analysis_details}, and $X_f$ is the set of $n$ x-values that result in the points $(x_i, f(x_i))$ being equally spaced along the graph of $f$.

The results of the analysis are shown in Figure~\ref{fig:equitabilityAnalysis}. The figure visualizes the analysis via both interpretable intervals and statistical power. By Theorem~\ref{thm:equivalence}, these two viewpoints are equivalent, and they are both shown here in order to help the reader build intuition for this equivalence. For instance, the worst-case $0.1$-interpretability of $\MICestE$ here is $2.92$, because the widest interpretable interval is of size $2.92$. And indeed, $\MICestE$ yields right-tailed tests with $1-0.1/2 = 95\%$ power at distinguishing any null hypothesis of the form $H_0: R^2(\mcZ) = x_0$ from any alternative hypothesis of the form $H_1: R^2(\mcZ) = x_1$ provided $x_1 - x_0 > 1/2.92 = 0.342$.

As the figure demonstrates, the equitability of $2.92$ achieved by $\MICestE$ on this $\Q$ is the highest among the methods examined. In contrast, the equitabilities with respect to $R^2$ of distance correlation and mutual information estimation on this $\Q$ are $1$ and $1.04$, respectively. For a more extensive analysis that varies the sample size as well as noise model and marginal distributions, and compares many more methods, see \cite{reshef2015comparisons}.

\begin{figure}[t]
\centering
    \includegraphics[clip=true, trim = 0.5in 0in 0.1in 0in,width=0.9\textwidth]{\pathToFigures/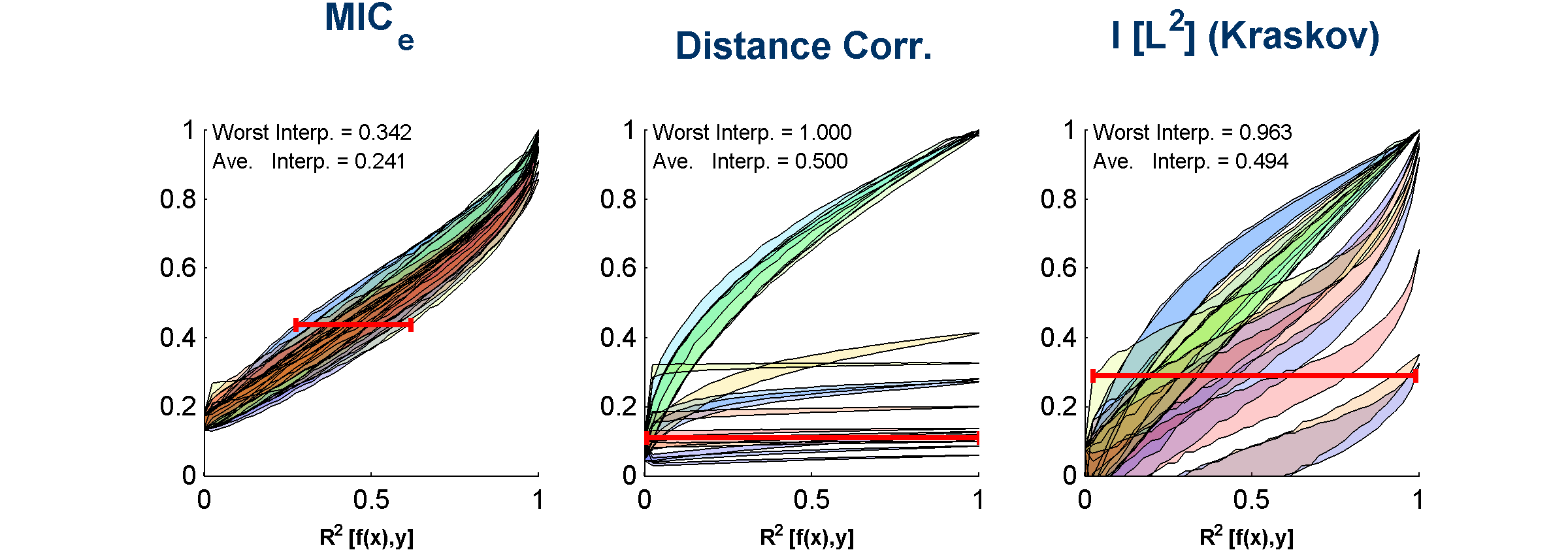}
    \\
    \vspace{1\bigskipamount}
    \includegraphics[clip=true, trim = 0.5in 0in 0.1in 0in,width=0.9\textwidth]{\pathToFigures/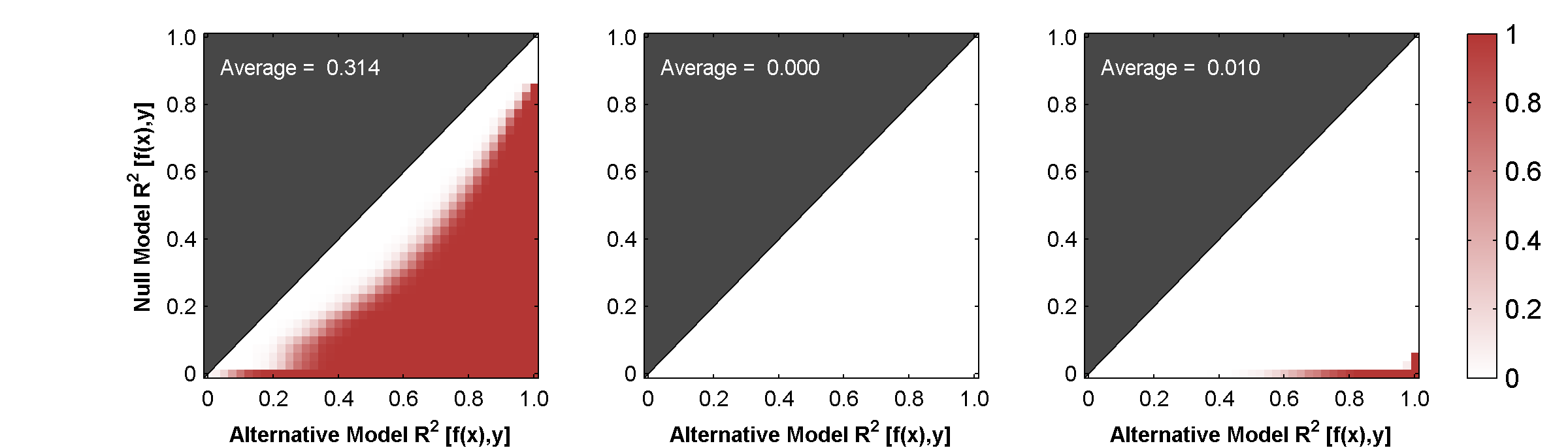}
\caption{An analysis of the equitability with respect to $R^2$ of three measures of dependence on a set of functional relationships.
    The set of relationships used is described in Section~\ref{sec:equitabilityAnalysis}. Each column contains results for the indicated measure of dependence.
    \figpart{Top} The analysis visualized via interpretable intervals as in Figure~\ref{fig:equitabilityExample}. \textit{[Narrower is more equitable.]} The worst-case and average-case widths of the $0.1$-interpretable intervals for the statistic in question are indicated.
    \figpart{Bottom} The same analysis visualized via statistical power as in Figure~\ref{fig:equitability_power_schematic}. \textit{[Redder is more equitable.]} The average power across all pairs of null and alternative hypotheses is computed for each plot.
    For a legend describing which functional relationships were analyzed and which parameters were used for each method, see Appendix~\ref{app:analysis_details}.}
\label{fig:equitabilityAnalysis}
\end{figure}

\section{Conclusion}
\label{sec:conclusion}
Informally, given some measure $\Phi$ of relationship strength, the {\em equitability} of a measure of dependence $\hvphi$ with respect to $\Phi$ is the degree to which $\hvphi$ allows us to draw inferences about relationship strength across a broad set of relationship types. We give here a conceptual framework to motivate equitability and then discuss the contributions of this work.

\subsubsection{The motivation for equitability}
There are two different ways to motivate equitability. The first is to begin with a measure of dependence $\hvphi$ and to observe that, though $\hvphi$ will asymptotically allow us to detect all deviations from independence in a data set, it need not tell us anything about the strength of those relationships. Since it often happens that we detect many more relationships than can be realistically followed up, it would be desirable to have $\hvphi$ tell us something not just about the presence or absence of a relationship, but also about relationship strength as defined by $\Phi$ on at least a partial set of ``standard relationships'' $\Q$.

The second way is to suppose that $\hvphi$ is a consistent estimator of $\Phi$ on $\Q$ and to ask ``what is the minimal requirement we can add to ensure that $\hvphi$ is robust to detecting relationships outside of $\Q$?'' Perhaps the weakest stipulation we can impose is that the population value $\varphi$ of our statistic be non-zero in cases of non-trivial dependence of any sort. That is, we want $\hvphi$ to be a measure of dependence as well.

Both of these scenarios would be resolved by a measure of dependence that is also a consistent estimator of $\Phi$. However, in many interesting cases there is no known statistic satisfying both properties: for instance, if $\Q$ is a set of noisy functional relationships and $\Phi$ is $R^2$, then on the one hand computing the sample $R^2$ with respect to a non-parametric estimate of the generating function will be a consistent estimator of $\Phi$, but will give a score of 0 to a circle. And on the other hand, no measure of dependence is known also to be a consistent estimator of $R^2$ on noisy functional relationships.

This naturally leads us to wonder whether, despite the difficulty of simultaneously estimating $\Phi$ consistently and retaining the properties of a measure of dependence, we can at least seek an approximate version of this ideal. Doing so, however, requires a weaker requirement than consistent estimation. This is what leads us to equitability. Equitability allows us to seek statistics that have the robustness of measures of dependence but that also, via their relationship to a property of interest $\Phi$, give values that have a clear, if approximate, interpretation and can therefore be used to rank relationships.

\subsubsection{Contributions of this work}
In this paper, we formalized and developed the theory of equitability in three ways. We first defined the equitability of a statistic $\hvphi$ on $\Q$ with respect to $\Phi$ as the extent to which $\hvphi$ give us good interval estimates of $\Phi$ on $\Q$. Our definition rests on an object called the interpretable interval, which has coverage guarantees with respect to $\Phi$. We define $\hvphi$ to be equitable if all of its interpretable intervals are small.

Second, we showed that this formalization of equitability can be equivalently stated in terms of power against a specific set of null hypotheses corresponding to different relationship strengths. That is, while measures of dependence have conventionally been judged by their power at distinguishing non-trivial signal from statistical independence, equitability is equivalent to the stronger property of being able to distinguish different degrees of possibly non-trivial signal strength from each other.

Third, we defined a concept called low detection threshold, which stipulates that, at a fixed sample size, a statistic yield independence tests with a guaranteed minimal power to detect relationships whose strength passes a certain threshold, across a range of relationship types. We showed that low detection threshold is a straightforward consequence of equitability. Since the converse does not hold, low detection threshold is a natural weaker criterion that one could aim for when equitability proves difficult to achieve.

Our formalization and its results serve three primary purposes. The first is to provide a framework for rigorous discussion and exploration of equitability and related concepts. The second is to situate equitability in the context of interval estimation and hypothesis testing and to clarify its relationship to central concepts in those areas such as confidence and statistical power. The third is to show that equitability and the language developed around it can help us to both formulate and achieve other useful desiderata for measures of dependence.

These connections provide a framework for thinking about the utility of both current and future measure of dependence for exploratory data analysis. Power against independence, the lens through which measures of dependence are currently evaluated, is appropriate in many settings in which very few significant relationships are expected, or in which we want to know whether one specific relationship is non-trivial or not. However, in situations in which most measures of dependence already identify a large number of relationships, a rigorous theory of equitability will allow us to begin to assess when we can glean more information from a given measure of dependence than just the binary result of an independence test.

Of course, there is much left to understand about equitability. For instance, to what extent is it achievable for different properties of interest? What are natural and useful properties of interest for sets $\Q$ besides noisy functional relationships? For common statistics such as $\MIC$ \cite{MINE} or $\MICestE$ \cite{reshef2015estimating}, can we obtain a theoretical characterization of the sets $\Q$ for which good equitability with respect to $R^2$ is achieved? Are there systematic ways of obtaining equitable behavior via a learning framework as was done for causation in \cite{lopez2015towards}? These questions all deserve attention.

Equitability as framed here is certainly not the only goal to which we should strive in developing new measures of dependence. As data sets not only grow in size but also become more varied, there will undoubtedly develop new and interesting use-cases for measures of dependence, each with its own way of assessing success. Notwithstanding which particular modes of assessment are used, it is important that we formulate and explore concepts that move beyond power against independence, at least in the bivariate setting. Equitability provides one approach to coping with the changing nature of data exploration, but more generally, we can and should ask more of measures of dependence.

\section{Acknowledgments}
The authors would like to acknowledge R Adams, E Airoldi, H Finucane, A Gelman, M Gorfine, R Heller, J Huggins, J Mueller, and R Tibshirani for constructive conversations and useful feedback.

\bibliographystyle{ieeetr}
\bibliography{\pathToCommon/References}

\newpage

\appendix

\section{Details of analyses}
\label{app:analysis_details}
\subsection{Functions analysed in Figures~\ref{fig:equitabilityExample} and~\ref{fig:equitabilityAnalysis}}
Below is the legend showing which function types correspond to the colors in each of Figures~\ref{fig:equitabilityExample} and~\ref{fig:equitabilityAnalysis}. The functions used are the same as the ones in the equitability analyses of \cite{reshef2015comparisons}.

\begin{figure}[h!]
\centering
\includegraphics[clip=true, trim = 4.125in 1.675in 1.75in 1.675in,width=0.3\textwidth]{\pathToFigures/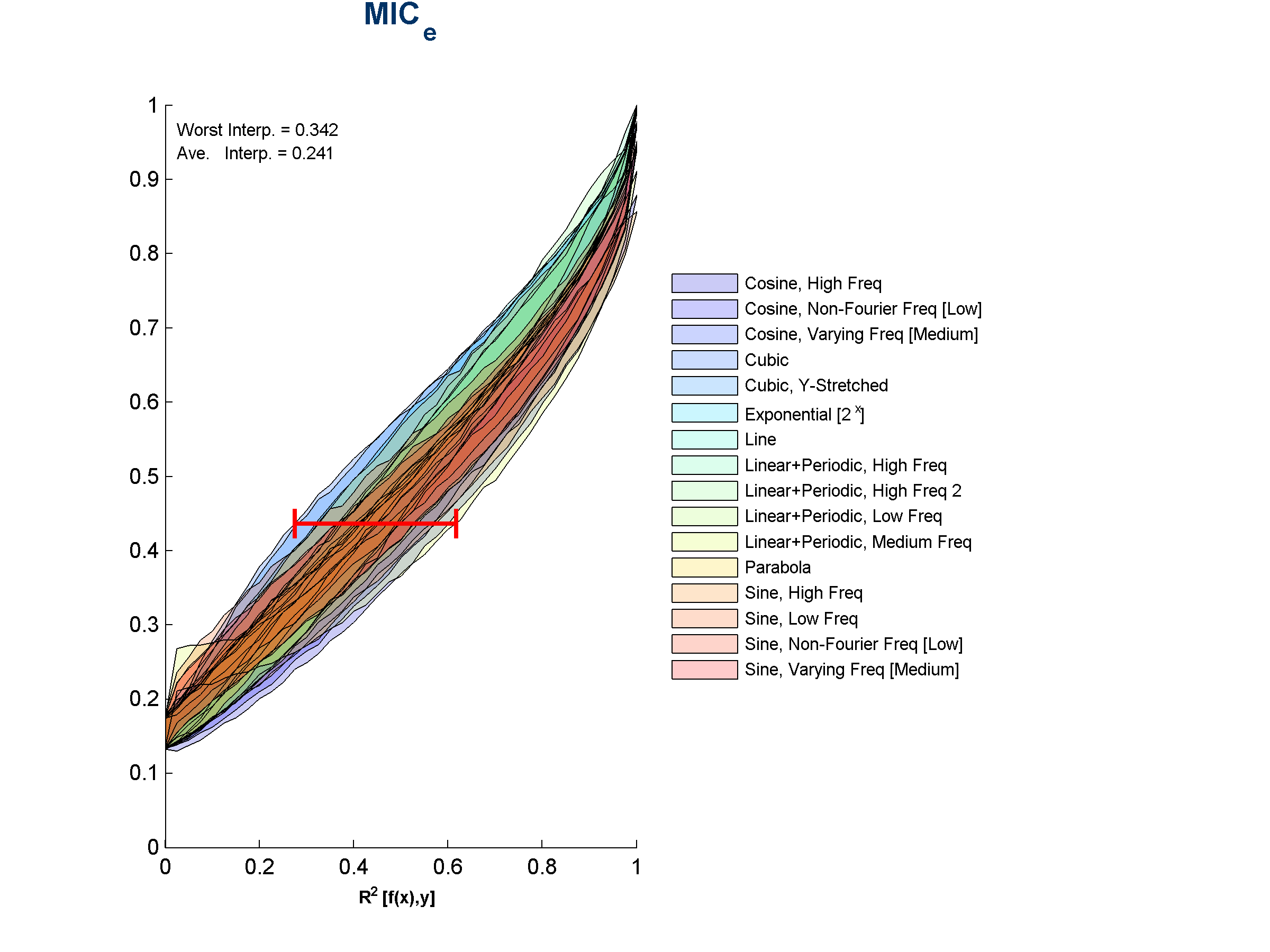}
\caption*{The legend for Figures~\ref{fig:equitabilityExample} and~\ref{fig:equitabilityAnalysis}.}
\label{fig:my_label}
\end{figure}

\subsection{Parameters used in Figure~\ref{fig:equitabilityAnalysis}}
In the analysis of the equitability of $\MICestE$, distance correlation, and mutual information, the following parameter choices were made: for $\MICestE$, $\alpha = 0.8$ and $c=5$ were used; for distance correlation no parameter is required; and for mutual information estimation via the Kraskov estimator, $k=6$ was used. The parameters chosen were the ones that maximize overall equitability in the detailed analyses performed in \cite{reshef2015comparisons}. For mutual information, the choice of $k=6$ (out of the parameters tested: $k=1,6,10,20$) also maximizes equitability on the specific set $\Q$ that is analyzed in Figure~\ref{fig:equitabilityAnalysis}.

\end{document}